\newtheorem{theorem}{Theorem}
\newtheorem{lemma}[theorem]{Lemma}
\journal{Elsevier}
\begin{document}

\begin{frontmatter}

%% Title, authors and addresses

%% use the tnoteref  command within \title for footnotes;
%% use the tnotetext command for theassociated footnote;
%% use the fnref command within \author or \address for footnotes;
%% use the fntext command for theassociated footnote;
%% use the corref command within \author for corresponding author footnotes;
%% use the cortext command for theassociated footnote;
%% use the ead command for the email address,
%% and the form \ead[url] for the home page:
\tnotetext[support]{Research supported by the
SEP-CONACYT A1-S-8397 grant of the second author.}
\author[FC]{Juan Carlos Garc\'ia-Altamirano}
\ead{carlos\_treze@ciencias.unam.mx}

\author[FC]{C\'esar Hern\'andez-Cruz\corref{cor1}}
\ead{chc@ciencias.unam.mx}

\address[FC]{Facultad de Ciencias\\
	           Universidad Nacional Aut\'onoma de M\'exico\\
						 Av. Universidad 3000, Circuito Exterior S/N\\
						 Delegaci\'on Coyoac\'an, C.P. 04510\\
						 Ciudad Universitaria, D.F., M\'exico}

%% \ead[url]{home page}
%\fntext[label2]{Bla}
%\fntext[label3]{Bla}
\cortext[cor1]{Corresponding Author}
%%\address{Address\fnref{label3}}
%% \fntext[label3]{}

\title{Minimal obstructions for a matrix partition problem
in chordal graphs\tnoteref{support}}

%% use optional labels to link authors explicitly to addresses:
%% \author[label1,label2]{}
%% \address[label1]{}
%% \address[label2]{}

\author{}

\address{}

\begin{abstract}
%% Text of abstract
If $M$ is an $m \times m$ matrix over $\{ 0, 1, \ast \}$,
an $M$-partition of a graph $G$ is a partition $(V_1, \dots
V_m)$ such that $V_i$ is completely adjacent (non-adjacent)
to $V_j$ if $M_{ij} = 1$ ($M_{ij} = 0$), and there are no
further restrictions between $V_i$ and $V_j$ if $M_{ij} =
\ast$.   Having an $M$-partition is a hereditary property,
thus it can be characterized by a set of minimal
obstructions (forbidden induced subgraphs minimal with
the property of not having an $M$-partition).

It is known that for every $3 \times 3$ matrix $M$ over
$\{ 0, 1, \ast \}$, there are finitely many chordal minimal
obstructions for the problem of determining whether a graph
admits an $M$-partition, except for two matrices,
$M_1 = \left(
\begin{array}{ccc}
0    & \ast & \ast \\
\ast & 0    & 1 \\
\ast & 1    & 0
\end{array}
\right)$
and
$M_2 = \left(
\begin{array}{ccc}
0    & \ast & \ast \\
\ast & 0    & 1 \\
\ast & 1    & 1
\end{array}
\right)$.
For these two matrices an infinite family of chordal
minimal obstructions is known (the same family for both
matrices), but the complete set of minimal obstructions
is not.

In this work we present the complete family of chordal
minimal obstructions for $M_1$.
\end{abstract}

\begin{keyword}
%% keywords here, in the form: keyword \sep keyword
chordal graph \sep $M$-partition \sep matrix partition
\sep forbidden induced subgraph characterization

%% MSC codes here, in the form: \MSC code \sep code
\MSC 05C75 \sep 05C15 \sep 68R10
%% or \MSC[2008] code \sep code (2000 is the default)

\end{keyword}

\end{frontmatter}

%% \linenumbers

\section{Introduction}

Let $M$ be an $m \times m$ symmetric matrix over $\{ 0 , 1,
\ast \}$, we will call such a matrix a {\em pattern}. An
{\em $M$-partition} of a graph $G$ is a partition $(V_1,
\dots, V_m)$ of $V_G$ such that, for every $i,j \in \{ 1,
\dots, m \}$, every vertex in $V_i$ is adjacent to every
vertex in $V_j$ if $M_{ij} = 1$, every vertex in $V_i$ is
non-adjacent to every vertex in $V_j$ if $M_{ij} = 0$, and
there are no restrictions between vertices of $V_i$ and
vertices of $V_j$ if $M_{ij} = \ast$.   Notice that in
the previous definition we might have $i = j$, in which
case we will have that $V_i$ is a clique if $M_{ii} = 1$,
and $V_i$ is an independent set if $M_{ii} = 0$.   As it
is common in Graph Theory, we do not ask every part in our
partitions to be non-empty, so we will usually forbid the
diagonal entries of our patterns to be $\ast$; if $M_{ii}
= \ast$ for some $i \in \{ 1, \dots, m \}$, then we would
have a part without restrictions, so we might place all the
vertices in $V_i$ to obtain a trivial $M$-partition.   A
graph that admits an $M$-partition is called {\em
$M$-partitionable}.

Many classical problems in Graph Theory can be stated as
$M$-partition problems, e.g., the matrix partition problem
corresponding to a $k \times k$ pattern with all diagonal
entries equal to $0$ and all off-diagonal entries equal to
$\ast$ is just the usual $k$-colouring problem; for every
pattern $M$ without $1$'s, the $M$-partition problem
corresponds to an $H$-homomorphism problem, where $H$ is
the graph whose adjacency matris is obtained from $M$ by
replacing each $\ast$ by a $1$.   The interested reader
may refer to \cite{hellEJC35} for a wonderful survey on
the subject.   Clearly, having an $M$-partition is a
hereditary property, and thus, for a fixed pattern $M$,
the class of $M$-partitionable graphs can be characterized
in terms of a family of forbidden induced subgraphs.   A
graph is a {\em minimal obstruction} for the $M$-partition
problem if it is not $M$-partitionable, but every proper
induced subgraph is.   The set of all minimal obstructions
for an $M$-partition problem is, besides a family of
forbidden induced subgraphs characterizing $M$-partitionable
digraphs, also a set of no-certificates for an algorithm
that verifies if a graph is $M$-partitionable.   Recall
that a {\em certifying algorithm} is an algorithm for a
decision problem which provides a yes-certificate for
every yes-instance of the problem, and a no-certificate
for every no-instance of the problem.   If the validity
of these certificates can be verified faster than the
time it takes to actually solve the problem, then they
provide a valuable tool to check the correctness of an
implementation of the algorithm.

In \cite{hellEJC35}, two main problems related to
matrix partitions are discussed.   The {\em
Characterization Problem} asks which patterns $M$
have the property that the number of minimal
obstructions to $M$-partition is finite.   The
{\em Complexity Problem} asks which patterns $M$
have the property that the $M$-partition problem
can be solved by a polynomial time algorithm.
Since both problems are very hard in the general
case, it has been a common practice to restrict them
to well behaved families of graphs.   Recall that a
graph is {\em perfect} if for every induced subgraph,
its chromatic number coincides with the size of a
maximum clique.   As perfect graphs have a unique
minimal obstruction for the $k$-colouring problem
(the complete graph on $k$ vertices), it is usual
to consider a family of perfect graphs, like cographs
or chordal graphs. It is known that the answer for the
characterization problem is always positive for cographs
\cite{damaschkeJGT14, federGTP2006} and split graphs
\cite{federDAM166}.   A graph $G$ is {\em chordal} if
every cycle in $G$ has a chord, or equivalently, if
every induced cycle in $G$ has length $3$. The class
of chordal graphs is one of the best understood graph
families where these two problems are still open.
Many papers deal with matrix partition problems for
chordal graphs, e.g., in \cite{hellDAM141}, a forbidden
subgraph characterization and a polynomial time
recognition algorithm is given for chordal graphs
admitting a partition into $k$ independent sets and
$\ell$ cliques, \cite{federTCS349} considers the list
version of the problem on chordal graphs, polarity of
chordal graphs is studied in \cite{ekimDAM156}, in
\cite{hellDM338} the $M$-partition problem is studied
on chordal graphs for a special family of patterns
called joining matrices.   In particular, this work
might be thought as a complement of \cite{federDM313},
where both the characterization problem and the
complexity problem is studied for small matrices
($m \times m$ with $m < 5$) on chordal graphs.   In
particular, they show that if $M$ is a matrix of
size $m < 4$, then $M$ has finitely many chordal
minimal obstructions, except for the following two
matrices, which have inifinitely many chordal
minimal obstructions.
\[
	M_1 = \left(
	\begin{array}{ccc}
	0    & \ast & \ast \\
	\ast & 0    & 1    \\
	\ast & 1    & 0
	\end{array}
	\right)
	\hspace{1cm}
	M_2 = \left(
	\begin{array}{ccc}
	0    & \ast & \ast \\
	\ast & 0    & 1    \\
	\ast & 1    & 1
	\end{array}
	\right)
\]
In \cite{federDM313}, an infinite family of chordal
minimal obstructions is exhibited for these patterns
(the same family works for both patterns).  Despite
the fact that the complete family of minimal obstructions
is obtained for other patterns, no efforts are made to
obtain the complete list for patterns $M_1$ and $M_2$.
The main objective of the present work is to provide
this missing list for $M_1$.   As we will see, among
the minimal obstructions there is a single infinite
family, all the other obstructions occur sporadically.

Let $\mathcal{F}$ be the family of chordal graphs containing
$F_i$ for $i \in \{1, \dots, 7\}$, and the members of the
family $\mathcal{F}_1$, depicted in Figure \ref{fig:ObsMin}.
Notice that the graphs in $\mathcal{F}_1$ consist of an odd
path of length at least $5$, together with an additional
vertex adjacent to every vertex of the path, except for the
first and last.

\begin{figure}[ht!]
	\centering
	\subfloat[$F_1$]{\scalebox{0.45}{\includegraphics{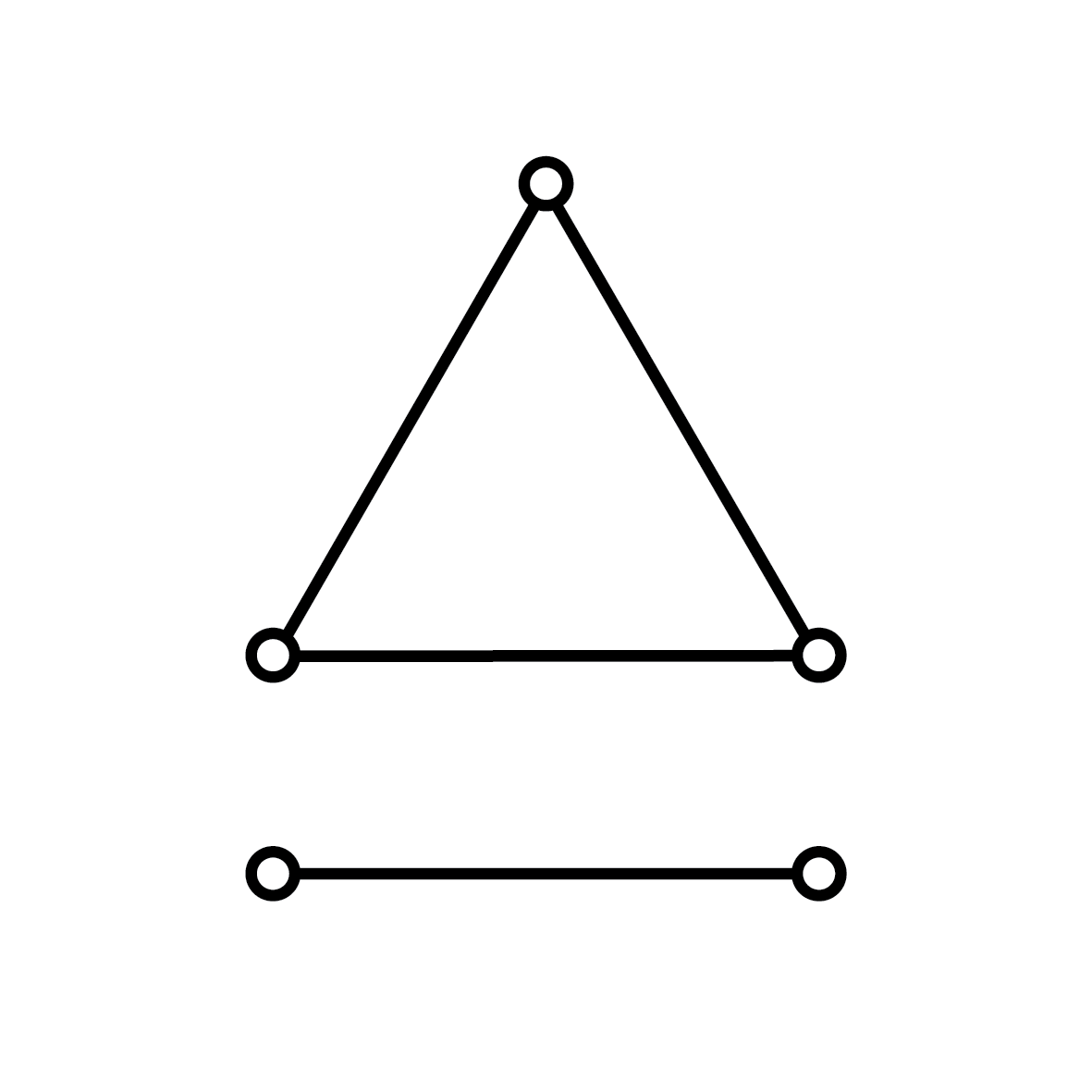}}}%
	\hfill
	\centering
	\subfloat[$F_2$]{\scalebox{0.45}{\includegraphics{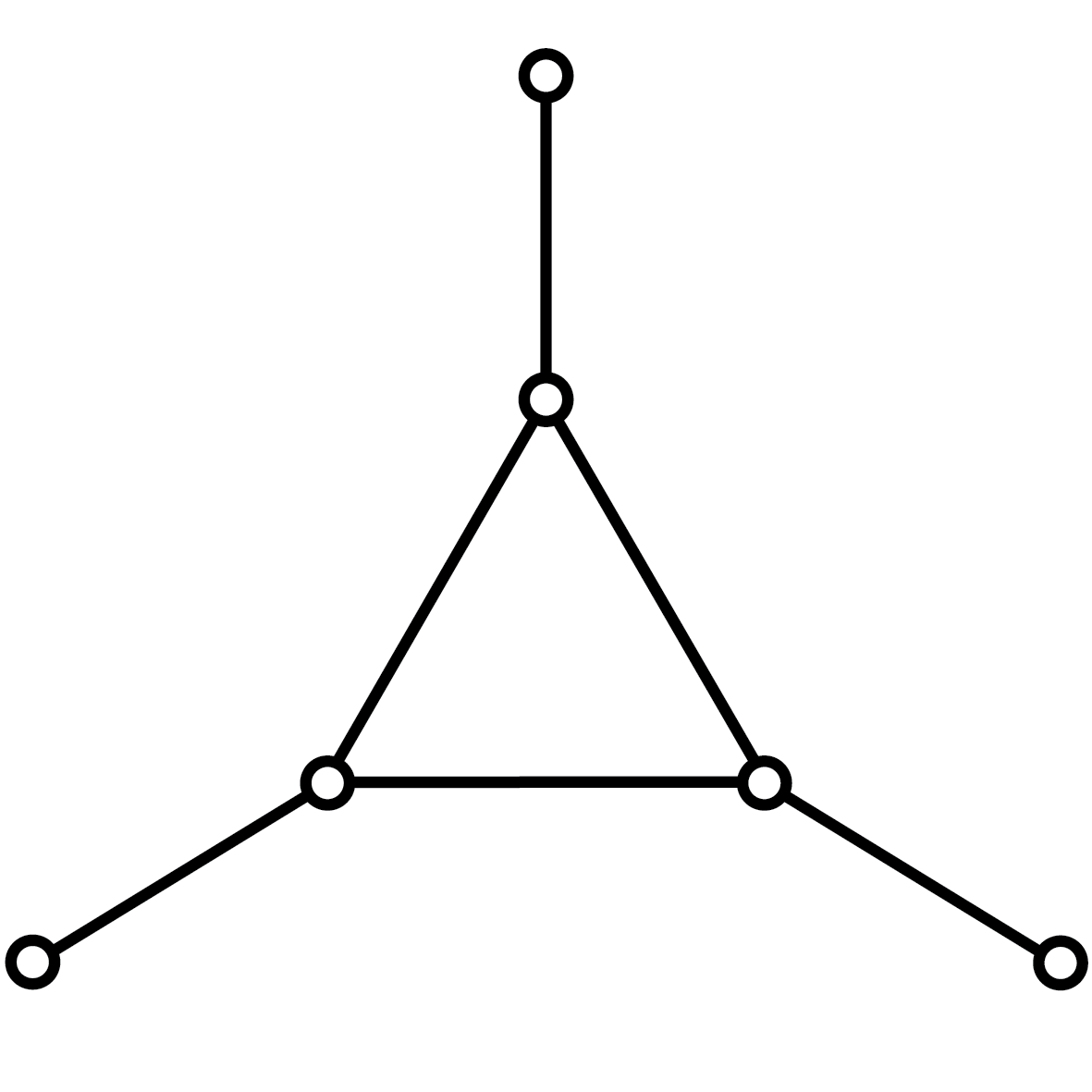}}}%
	\hfill
	\centering
	\subfloat[$F_3$]{\scalebox{0.45}{\includegraphics{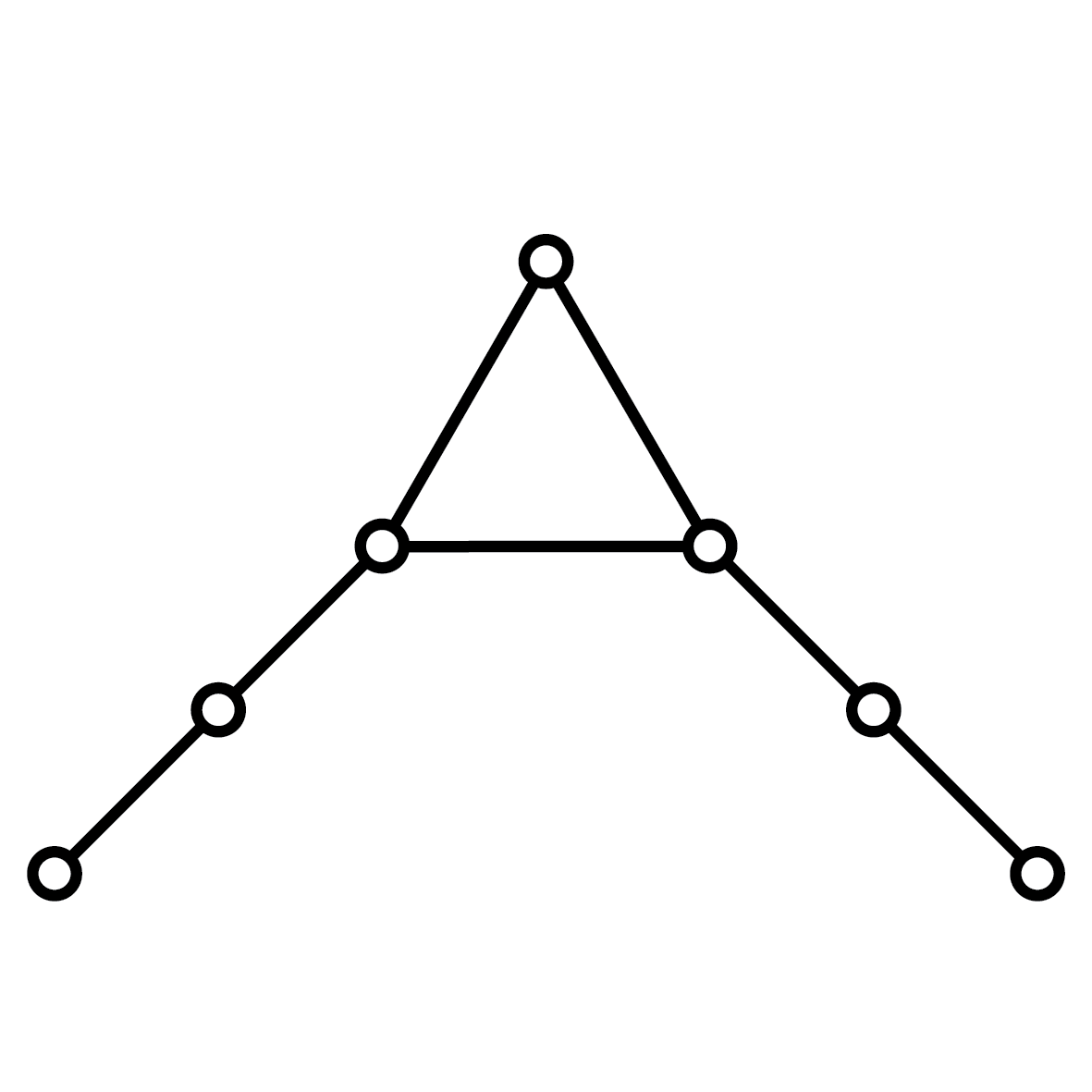}}}%
	\hfill
	\centering
	\subfloat[$F_4$]{\scalebox{0.45}{\includegraphics{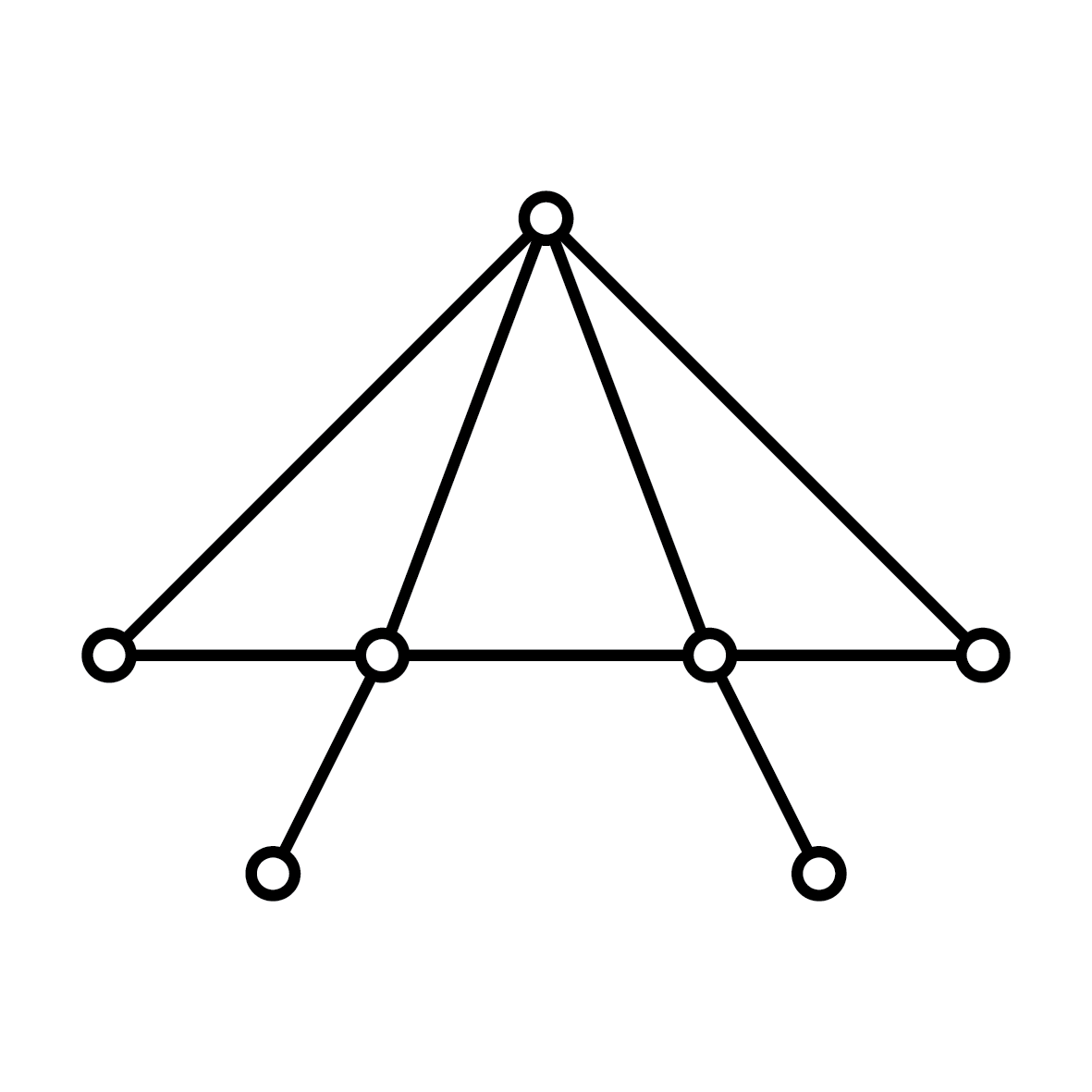}}}%
	\hfill
	\centering
	\subfloat[$F_5$]{\scalebox{0.45}{\includegraphics{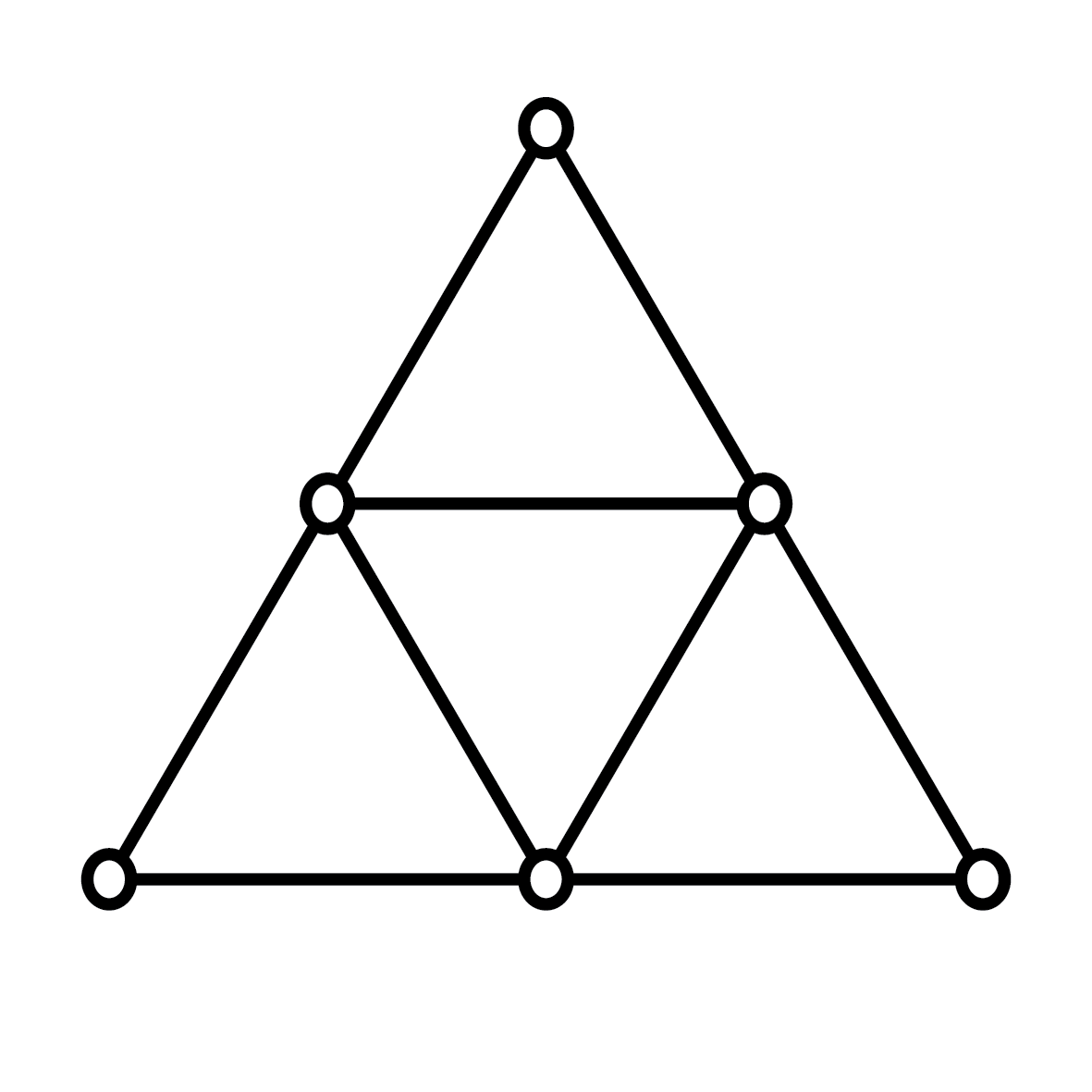}}}%
	\hfill
	\centering
	\subfloat[$F_6$]{\scalebox{0.45}{\includegraphics{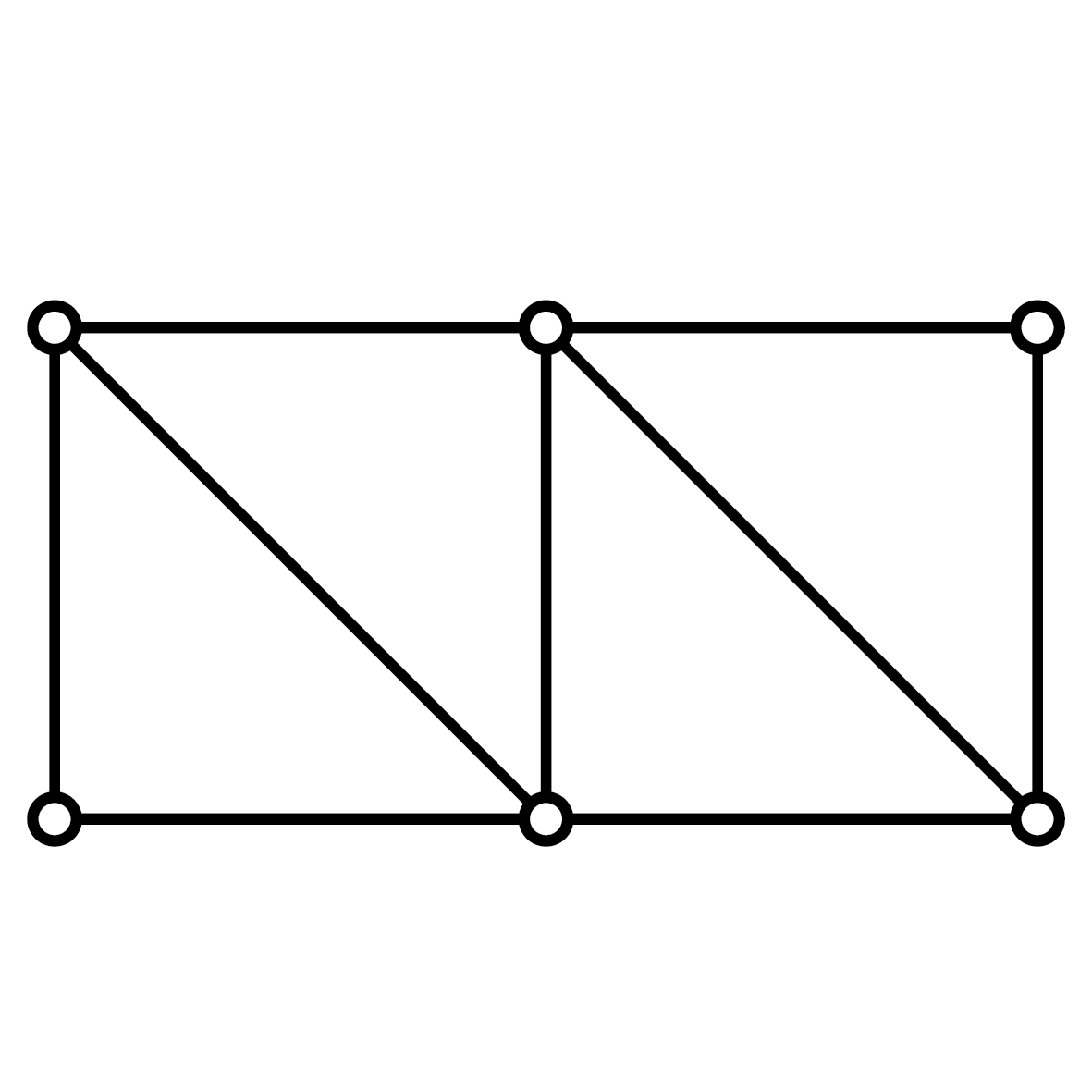}}}%
	\hfill
	\centering
	\subfloat[$F_7$]{\scalebox{0.45}{\includegraphics{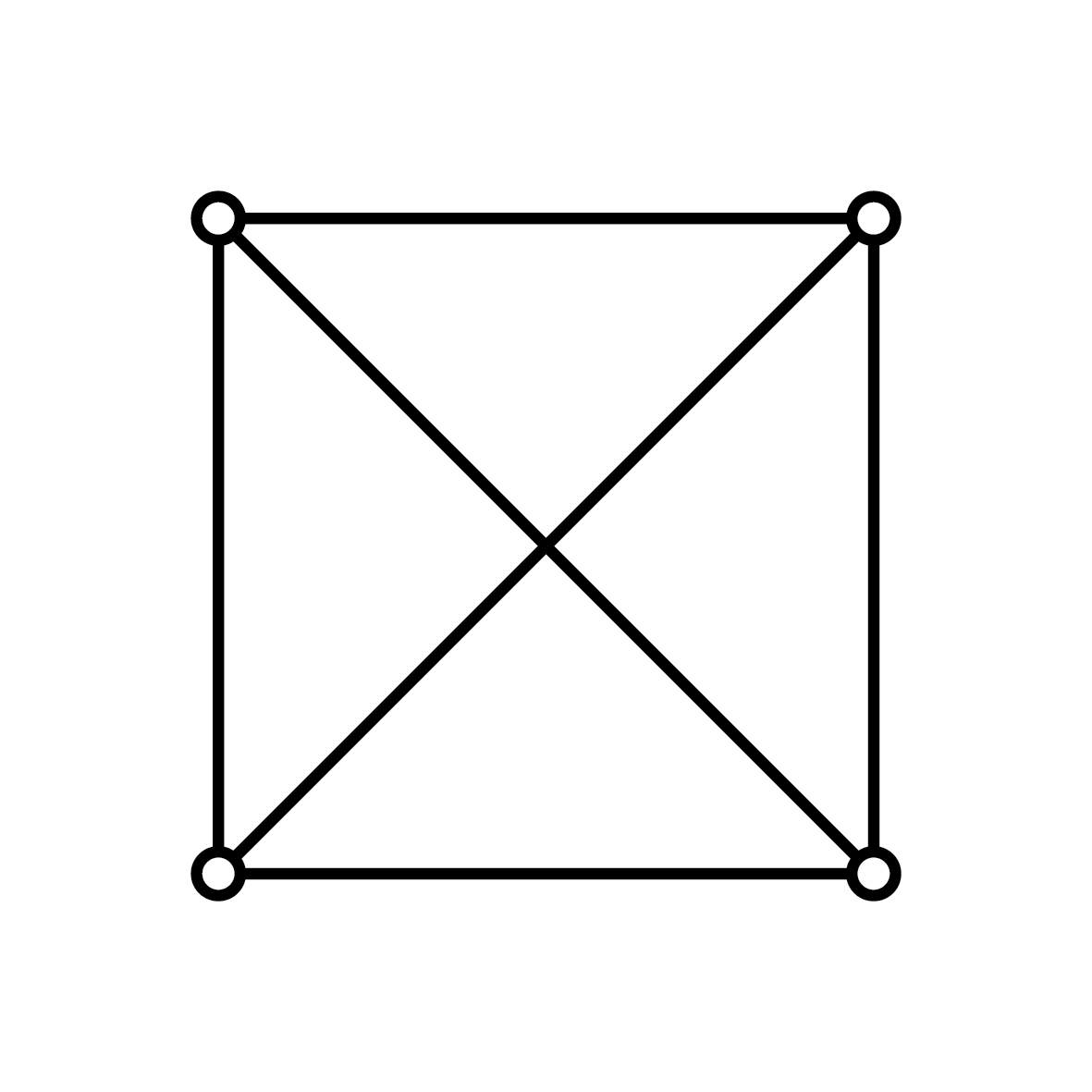}}}%
	\hfill
	\centering
	\subfloat[$\mathcal{F}_1$]{\scalebox{0.45}{\includegraphics{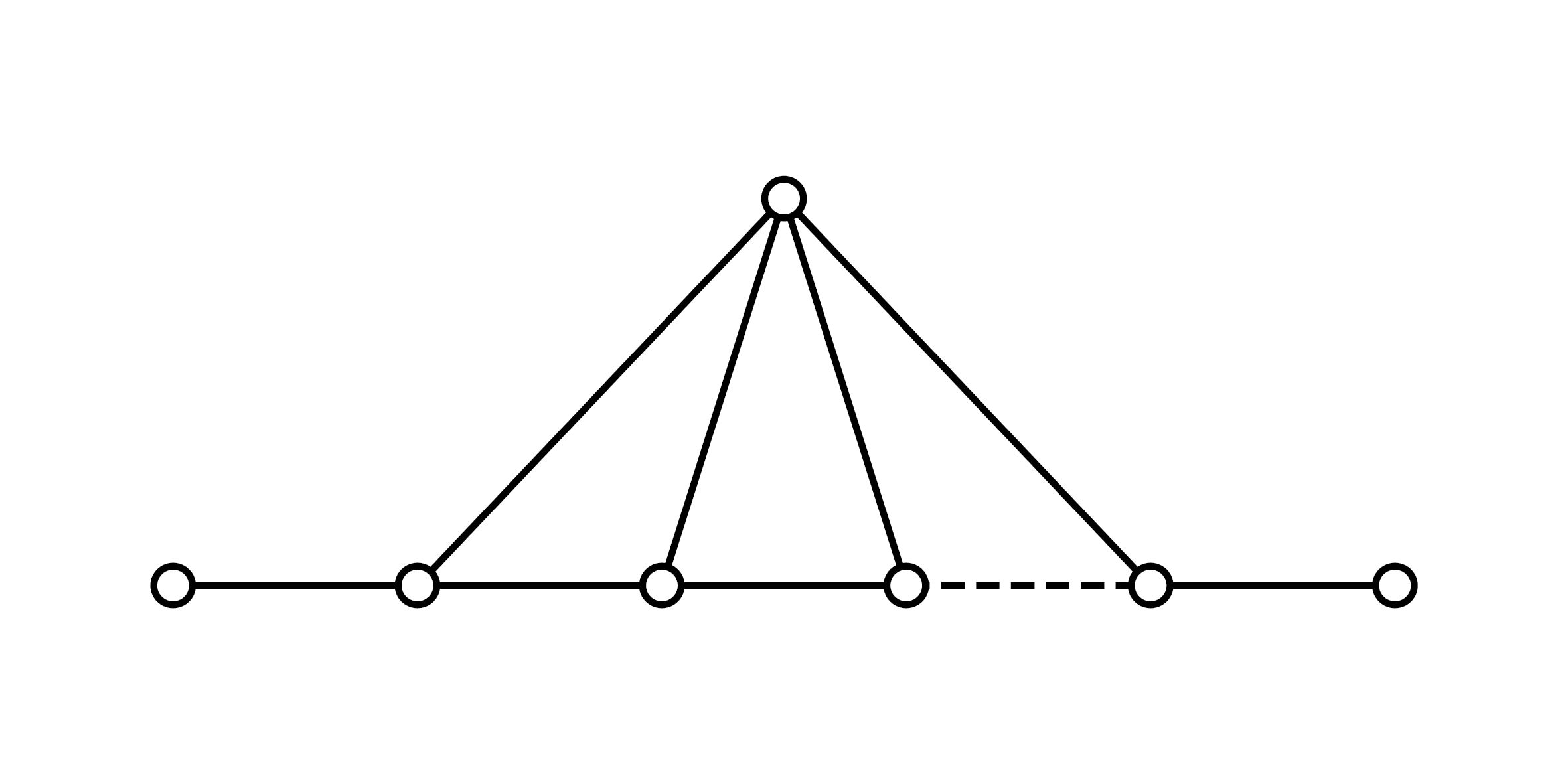}}}%
	\hfill
	\caption{The family $\mathcal{F}$.}
	\label{fig:ObsMin}
\end{figure}

The main result of this work is the following theorem.

\begin{theorem}\label{thm:M3}
If $G$ is a chordal graph, then $G$ admits an $M_1$-partition
if and only if it is $\mathcal{F}$-free.
\end{theorem}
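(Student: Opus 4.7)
The claim naturally splits into necessity and sufficiency; necessity is the routine direction. For it, I would verify that no graph in $\mathcal{F}$ is $M_1$-partitionable. The sporadic graphs $F_1, \ldots, F_7$ are checked by direct inspection. For the infinite family $\mathcal{F}_1$, the key observation is that in any $M_1$-partition of a chordal graph, the induced subgraph on $V_2 \cup V_3$ is complete bipartite and chordal, hence a star (possibly edgeless), since $K_{2,2} = C_4$ is not chordal. We may therefore assume $V_2 = \{w\}$ with $V_3 \subseteq N(w)$ independent. If $H \in \mathcal{F}_1$ is the odd path $v_0 v_1 \cdots v_\ell$ (with $\ell \geq 5$) plus a vertex $u$ adjacent to $v_1, \ldots, v_{\ell-1}$, then $u$ is the only vertex meeting every triangle of $H$, forcing $w = u$; but then $V_3$ would have to be a bipartition class of the path $H - u$, and each such class contains a path endpoint not in $N(u)$, yielding the contradiction.

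For sufficiency I would proceed by induction on $|V(G)|$. If $G$ is triangle-free, chordality forces $G$ to be a forest, so setting $V_1 = \emptyset$ together with the unique bipartition of $G$ gives an $M_1$-partition. Otherwise, by the remark above, an $M_1$-partition exists if and only if $G$ admits a \emph{good} vertex $v$, that is, a vertex satisfying (a) $v$ lies in every triangle of $G$, and (b) for each connected component $T$ of the forest $G - v$, one of the two classes of the unique bipartition of $T$ is contained in $N(v)$. The task therefore reduces to producing a good vertex in every chordal, $\mathcal{F}$-free graph that contains a triangle.

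I would establish the existence of such a vertex by contradiction, taking $G$ to be a minimum counterexample and locating a forbidden induced subgraph. If no vertex satisfies (a), then two triangles intersect in at most one vertex (possibly in none); a local analysis exploiting chordality, combined with the inductive hypothesis applied to $G - x$ for a simplicial vertex $x$ (which exists by chordality), should expose one of the sporadic $F_i$. If some $v$ satisfies (a) but (b) fails in a component $T$, then either $N(v)$ meets both classes of $T$---in which case a shortest path in $T$ between two such vertices, joined via $v$, forms an induced odd cycle whose chords are forced (by chordality) to pass through $v$, producing the ``odd path plus vertex adjacent to all interior vertices'' configuration of $\mathcal{F}_1$---or $N(v)$ misses a vertex in each class, in which case a small configuration matching one of $F_1, \ldots, F_7$ appears. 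The main obstacle is executing this case analysis exhaustively and cleanly: the sporadic $F_i$ presumably encode all the small obstructions to satisfying conditions (a) and (b) simultaneously in the presence of chordal but non-forest local structure, and isolating each as an induced subgraph is the technical heart of the argument.
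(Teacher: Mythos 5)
Your high-level strategy coincides with the paper's. The necessity argument is correct and actually more explicit than the paper's (which omits the verification for $\mathcal{F}_1$): chordality forces one of $V_2,V_3$ to be a singleton $\{w\}$, $G-w$ is then bipartite with $V_3\subseteq N(w)$ being a bipartition class of each component, and in a member of $\mathcal{F}_1$ every such class contains an endpoint of the path, which is not a neighbour of the apex. Your reformulation of sufficiency as ``find a vertex $v$ lying in every triangle such that each component of the forest $G-v$ has a bipartition class inside $N(v)$'' is precisely the reduction the paper makes via its set $\mathcal{B}_G$ of vertices whose deletion leaves a bipartite graph, followed by explicit colourings.

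The gap is in the sufficiency direction: the entire technical content --- showing that a chordal, $\mathcal{F}$-free graph with a triangle actually possesses such a vertex --- is deferred to a case analysis you describe but do not carry out (``should expose one of the sporadic $F_i$''). This is not a routine omission. The paper needs several preparatory lemmas (culminating in Lemma~\ref{lem:Bipart}: a chordal graph with no $2K_3$, $F_5$ or $F_7$ subgraph has a vertex whose deletion leaves a bipartite graph) and then a three-way case distinction on $|\mathcal{B}_G|\in\{1,2,3\}$, each case using a different $F_i$ to bound the heights of the rooted trees that arise before an explicit colouring can be given. Two of your stated intermediate steps are also shaky. First, ``if no vertex satisfies (a), then two triangles intersect in at most one vertex'' is false as stated: in $F_7=K_4$ no vertex lies in every triangle, yet any two triangles share two vertices; extracting two triangles with small intersection requires the more careful analysis of how a third triangle meets two given ones that the paper performs. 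Second, the proposed induction via a simplicial vertex $x$ does not obviously close: a good vertex of $G-x$ need not remain good in $G$, since restoring $x$ can create a new triangle avoiding it or break condition (b) in its component, and indeed the paper's proof is non-inductive. So the skeleton is right, but the load-bearing part of the proof is missing.
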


For basic notions we refer the reader to \cite{bondy2008}.
Given a set $\mathcal{S}$ of graphs, we say that a graph
$G$ is {\em $\mathcal{S}$-free} if it does not contain any
member of $\mathcal{S}$ as an induced subgraph.   When
$\mathcal{S} = \{ S \}$ we will abuse notation and say that
$G$ is {\em $S$-free}.

The rest of this article is organized as follows.   In Section
\ref{sec:prelim}, technical results necessary to prove
Theorem \ref{thm:M3} are provided.   Section \ref{sec:main}
is devoted to prove Theorem \ref{thm:M3}.   We conclude
presenting conclusions and future lines of work in Section
\ref{sec:conc}.

\section{Preliminary results}
\label{sec:prelim}

In this section we will obtain some basic technical
results, necessary for the proof of our main theorem.

\begin{lemma}
\label{lem:F5}
Let $G$ be a chordal graph. If $G$ contains $F_5$ as a
subgraph, then it contains $F_5$ or $F_7$ as an induced
subgraph.
\end{lemma}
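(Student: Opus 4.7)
The plan is to fix a (not necessarily induced) copy of $F_5$ inside $G$ on vertex set $S$ and study the induced subgraph $G[S]$. This graph consists of the edges of $F_5$ together with some (possibly empty) set of chords, i.e.\ edges of $G$ between pairs of vertices of $S$ that are not edges of $F_5$. If the chord set is empty, then $G[S] \cong F_5$ and we are done; otherwise the goal is to show that chordality of $G$ forces $G[S]$ to contain $F_7$ as an induced subgraph.

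I would begin by listing the non-edges of $F_5$, grouped by the automorphism group of $F_5$ so that symmetric pairs are handled simultaneously. For each non-edge $e$, I would ask whether adding $e$ to $F_5$ still leaves some cycle of length at least $4$ chordless; such a chord either cannot appear in $G[S]$ on its own (because chordality of $G$ forbids it) or forces a further chord. Iterating this forcing yields a tree of cases in which each leaf is a specific superset of $E(F_5)$ compatible with chordality.

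After pruning incompatible subsets, I would exhibit, for each surviving chord configuration, an explicit isomorphism between $G[S]$ (restricted if necessary to an appropriate subset of $S$) and $F_7$. Since $F_7$ is one of the listed members of $\mathcal{F}$, matching the pattern of vertices and edges in Figure \ref{fig:ObsMin} completes the argument.

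The main obstacle will be bookkeeping the case analysis without redundancy: the number of chord subsets can be large, but many collapse under the symmetries of $F_5$ and under the chordality-forced implications. Exploiting both tools carefully should keep the analysis short and ensure that every possible $G[S]$ with at least one chord is shown to contain the specific pattern of $F_7$ as an induced subgraph.
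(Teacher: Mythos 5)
Your strategy coincides with the paper's own: fix the (not necessarily induced) copy of $F_5$ on a vertex set $S$, and analyse which of the six non-edges of $F_5$ appear in $G[S]$, using the automorphisms of $F_5$ to merge symmetric cases and chordality to force further chords until a copy of $F_7$ appears. The problem is that what you have written is only a plan: the entire mathematical content of the lemma \emph{is} the case analysis, and you never carry it out. You do not list the orbits of non-edges, you do not say which chords immediately produce $F_7$, you do not identify the cycles of length at least $4$ created by the remaining chords or what chordality forces from them, and you do not exhibit the promised isomorphisms. As it stands, nothing has been verified, so there is no proof yet.

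For the record, the analysis you defer is very short, so your worry about a large tree of cases is unfounded. Write $F_5$ with central triangle $\{2,4,6\}$ and outer vertices $1,3,5$, where $1$ is adjacent to $2$ and $6$, $3$ to $2$ and $4$, and $5$ to $4$ and $6$. The six non-edges fall into two orbits under $\mathrm{Aut}(F_5)$: the pairs $14, 25, 36$ joining an outer vertex to its opposite central vertex, and the pairs $13, 15, 35$ of outer vertices. If no non-edge is present in $G$, the copy is induced. If an edge of the first orbit is present, say $14$, then $\{1,2,4,6\}$ induces a $K_4$, which is an induced $F_7$ precisely because $F_7$ is complete (a point worth stating explicitly). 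If an edge of the second orbit is present, say $13$, then $(1,3,4,6,1)$ is a cycle of length $4$ in $G$ whose only possible chords are $14$ and $36$, both in the first orbit, so chordality reduces this to the previous case. That two-case verification is the proof, and it has to appear in your write-up.
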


\begin{proof}
Consider a copy of $F_5$ as a subraph of $G$ with the
configuration shown in Figure \ref{fig:Lema1Fig1}.

If there are no further edges, this is, if the dotted
edges are missing in the figure, then this is is an
induced copy of $F_5$.   If any of the edges  $14, 25$
or $36$ is present, then we have $F_7$ as an induced
subgraph.  If $13$ is present, then we also have the
cycle $(1,3,4,6,1)$ and, since $G$ is chordal, either
$14$ or $36$ must be an edge of $G$, and we are in the
previous case.   The cases when $15$ or $35$ are present
are analogous.

\begin{figure}
	\centering
	\includegraphics[height=4cm]{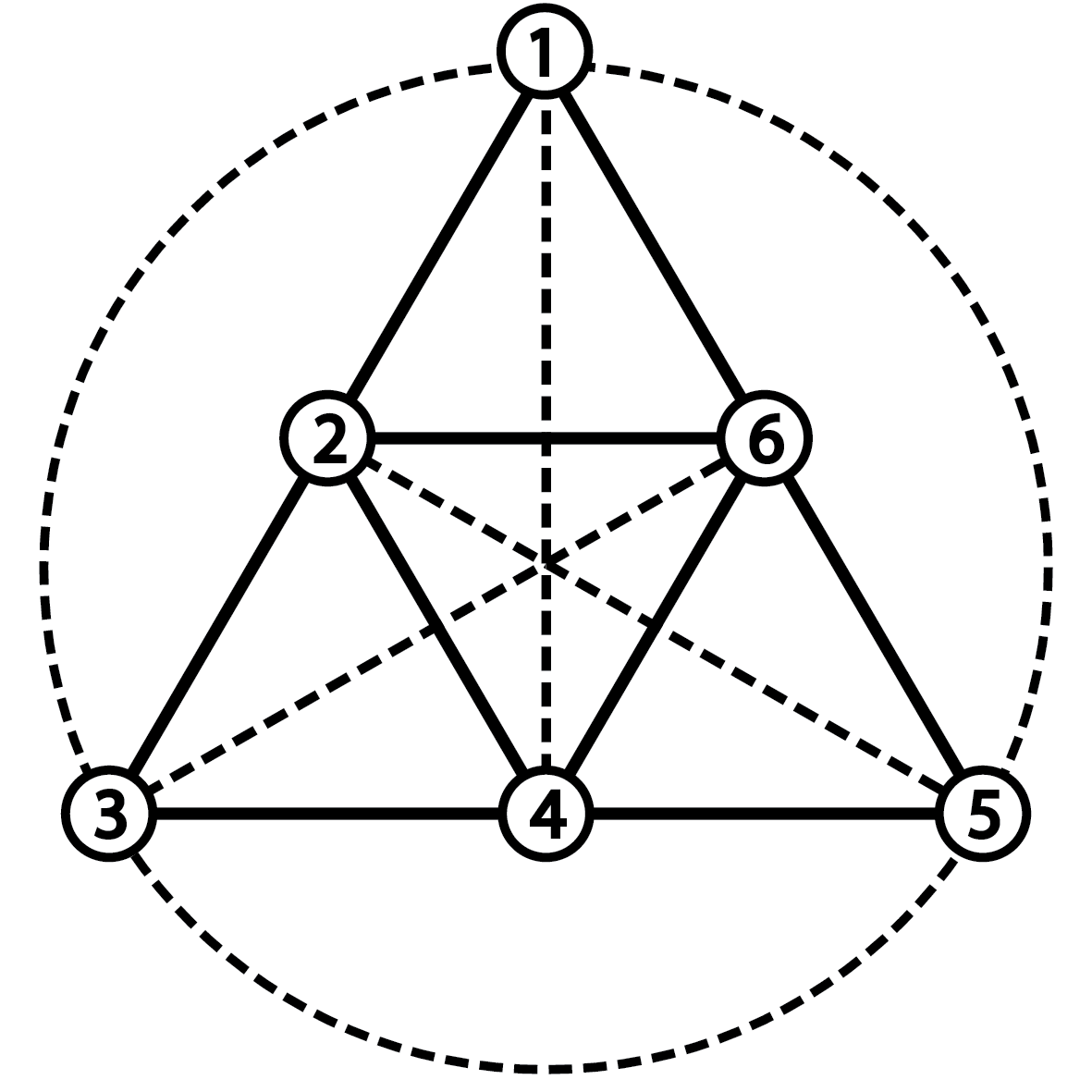}
	\caption{Configuration of $F_5$ for Lemma \ref{lem:F5}.}
	\label{fig:Lema1Fig1}
\end{figure}
\end{proof}

Our following lemma has a similar flavour.

\begin{lemma}
\label{lem:F6}
Let $G$ be a chordal graph. If $G$ contains $F_6$ as a
subgraph, then it contains $F_6$ or $F_7$ as an induced
subgraph.
\end{lemma}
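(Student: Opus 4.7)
The plan is to mimic the case analysis used in Lemma \ref{lem:F5}. I would fix a copy of $F_6$ as a (not necessarily induced) subgraph of $G$ with a specific labeling of its vertices, mirroring the way a figure would be set up, and then enumerate the possible extra edges of $G$ between those labeled vertices. Call these extra edges the \emph{potential chords}. Every potential chord falls into one of two classes: the \emph{good chords}, whose presence turns the copy of $F_6$ into an induced copy of $F_7$, and the \emph{bad chords}, whose presence creates an induced cycle of length at least $4$ in $G$.

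For the good chords, there is nothing to prove: if any one of them is present, we immediately exhibit $F_7$ as an induced subgraph. For the bad chords, the strategy is to observe that each such chord together with a specific path inside the $F_6$ subgraph forms a cycle of length $4$ (or possibly $5$); since $G$ is chordal, this cycle must have a chord. I would then verify, in each case, that any admissible chord of that cycle is either already one of the good chords (so we are done by the previous case) or itself produces a shorter cycle whose chord is, in turn, a good chord. Iterating this reduction a bounded number of times exhausts the cases.

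Finally, if no potential chord is present at all, then the subgraph copy of $F_6$ is already induced, and we are done. Collecting the three outcomes yields the conclusion of the lemma.

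The main obstacle I anticipate is organizing the case analysis so that it is both exhaustive and non-repetitive: $F_6$ has more vertices and edges than $F_5$, so the number of potential chords is larger, and some of them will fail to give a good chord directly but only after chasing two successive chordality constraints. To keep the argument clean I would group the potential chords by the symmetries of $F_6$ and handle one representative of each orbit, noting explicitly which bad chords reduce to which good chords via which $4$-cycle. As in Lemma \ref{lem:F5}, once the labeling of the figure is fixed the argument becomes a routine but careful check rather than a conceptual difficulty.
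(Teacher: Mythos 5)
Your plan matches the paper's proof of this lemma essentially step for step: the paper fixes a labeled copy of $F_6$, observes that the chords $16$, $24$, $35$ each turn the configuration into an induced $F_7$ (your ``good chords''), and reduces each remaining chord ($13$, $46$, $34$) via a $4$-cycle whose chordality-forced diagonal is either good or reduces once more (e.g.\ $34$ forces $13$ or $24$, and $13$ in turn forces $16$ or $35$ via the cycle $(1,5,6,3,1)$), exactly the bounded iteration you anticipate. All that is missing from your write-up is the explicit enumeration of which chords are good and which $4$-cycles drive each reduction, which is precisely the routine check you defer, so the approach is correct and the same as the paper's.
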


\begin{proof}
Consider a copy of $F_6$ as a subraph of $G$ with the
configuration shown in Figure \ref{fig:Lema2Fig2}.

If there are no further edges, this is, if the dotted
edges are missing in the figure, then this is is an
induced copy of $F_6$.   If any of the edges $16,
24$, or $35$ is present, then we have $F_7$ as an
induced subgraph. If the edge $13$ is present, the cycle
$(1,5,6,3,1)$, is contained in $G$, and it follows from
the chordality of $G$ that either $16$ or $35$ is an edge,
which takes us to the previous case. An analogous case
happens if $46$ is an edge of $G$. Finally, if $34$ is
present, then $(1,2,3,4,1)$ is a cycle in $G$, and thus
$13$ or $24$ must be an edge of $G$.   Again, this falls
in one of the previous cases.

\begin{figure}[ht!]
	\centering
	\includegraphics[height=4cm]{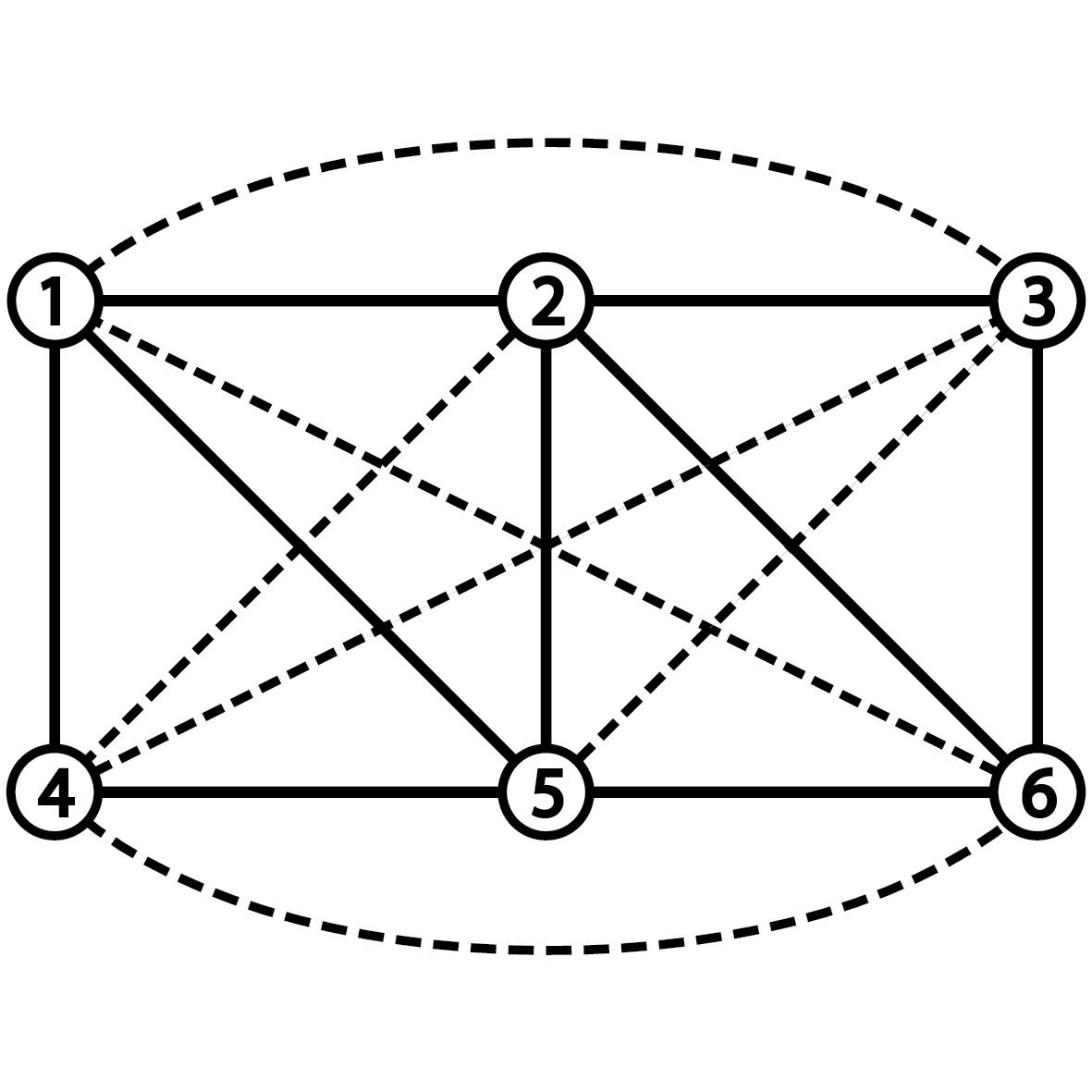}
	\caption{Cases for Lemma \ref{lem:F6}.}
	\label{fig:Lema2Fig2}
\end{figure}
\end{proof}

Although again, similar in nature, our following lemma
deals with a graph not in $\mathcal{F}$.   For consistency,
we will denote $2K_3$ by $F_0$.

\begin{lemma}
\label{lem:F0}
Let $G$ be a chordal graph. If $G$ contains $F_0$ as a
subgraph, then it contains $F_1$, $F_6$ or $F_7$ as an
induced subgraph.
\end{lemma}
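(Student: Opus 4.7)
The plan is to follow the pattern established by Lemmas~\ref{lem:F5} and~\ref{lem:F6}: fix a copy of $F_0 = 2K_3$ in $G$ on the two triangles $T_1 = \{1,2,3\}$ and $T_2 = \{4,5,6\}$, and analyze what additional edges between $T_1$ and $T_2$ the chordality of $G$ allows. There are at most nine such ``cross edges'' $ij$ with $i \in T_1$ and $j \in T_2$, and relabeling within each triangle, together with swapping the two triangles, yields a large symmetry group that reduces the case analysis substantially.

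If no cross edge is present, then the six vertices induce $F_0 = F_1$ and we are done. Otherwise, I would first argue that certain combinations of cross edges force further ones: whenever two cross edges $ij$ and $k\ell$ with $i \neq k$ and $j \neq \ell$ are present, together with the triangle edges $ik \in T_1$ and $j\ell \in T_2$ they form a $4$-cycle $(i,j,\ell,k,i)$, whose chord must be a third cross edge $i\ell$ or $jk$. Iterating this observation, together with the analogous forcing performed in the previous two lemmas, restricts the admissible cross-edge patterns to a short list up to symmetry.

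For each resulting pattern I would exhibit an induced copy of $F_1$, $F_6$ or $F_7$ on the six distinguished vertices. Intuitively, patterns with very few cross edges should yield an induced $F_6$ (which, as seen in Lemma~\ref{lem:F6}, is a mild perturbation of $2K_3$), while patterns with many forced cross edges should yield an induced $F_7$ (the ``maximal'' obstruction that also arises as the outcome in Lemmas~\ref{lem:F5} and~\ref{lem:F6}). For the intermediate patterns, invoking Lemma~\ref{lem:F5} or Lemma~\ref{lem:F6} once an $F_5$- or $F_6$-subgraph has been detected inside the configuration lets me conclude directly.

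The main obstacle I expect is the bookkeeping of the case split: even after quotienting by the symmetries of $2K_3$, several non-equivalent configurations of cross edges must be handled, and for some of them additional chordality arguments on $4$-cycles involving three vertices in one triangle and one in the other may be needed before the required induced copy of $F_6$ or $F_7$ becomes visible. Once the cases are enumerated, however, each is settled by a short chordality argument in the same spirit as the proofs of the two preceding lemmas.
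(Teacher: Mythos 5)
Your overall strategy --- fix the two triangles of the $F_0$-copy, classify the possible cross edges, use chordality to force chords of the resulting $4$-cycles, and finish by exhibiting an induced obstruction or invoking Lemma~\ref{lem:F6} --- is the same as the paper's. But as written the proposal has a genuine gap: the case analysis that constitutes the actual content of the proof is only announced, never carried out. You do not determine which cross-edge configurations survive the chordality forcing, nor which of $F_1$, $F_6$, $F_7$ each one yields; you explicitly flag this bookkeeping as the main obstacle and leave it unresolved. Moreover, the one concrete identification you do make is false: $F_1$ is not $F_0 = 2K_3$ (the paper is explicit that $F_0 \notin \mathcal{F}$ while $F_1 \in \mathcal{F}$); from its use elsewhere in the paper, $F_1$ is the disjoint union $K_3 \cup K_2$, a proper induced subgraph of $2K_3$. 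Your base case still goes through, since an induced $2K_3$ contains an induced $K_3 \cup K_2$, but not for the reason you give.

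The paper avoids the enumeration you fear by splitting only on the \emph{number} of cross edges. If there are fewer than three, chordality forbids two of them from forming a matching (the $4$-cycle you describe would be chordless), so all cross edges share a common endpoint $v$, and the triangle not containing $v$ together with the edge of the other triangle avoiding $v$ is an induced $K_3 \cup K_2 = F_1$. If there are at least three, then either three of them meet a common vertex $v_0$, in which case $v_0$ together with the opposite triangle induces $K_4 = F_7$, or two of them form a matching, in which case the forced diagonal of the resulting $4$-cycle produces a (not necessarily induced) copy of $F_6$ and Lemma~\ref{lem:F6} finishes. No enumeration of cross-edge patterns up to symmetry is needed, and each branch ends with an explicit obstruction. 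To repair your write-up you would need to supply exactly this kind of terminal analysis in every case.
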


\begin{proof}
Let $A$ and $B$ be the two triangles in $F_0$. If there
are less than three edges between $A$ and $B$, then, since
$G$ is chordal, they cannot form a matching (otherwise
there would be a chordless $C_4$ in $G$).   Thus, $G$
contains an induced copy of $F_1$.

So, suppose that there are at least three edges between
$A$ and $B$.   If three edges are incident in the same
vertex $v_0$ of $A$, then the vertices of $B$ together
with $v_0$ induce an $F_7$ (left side of Figure
\ref{fig:LemaF0}). Else, at least two of the edges from
$A$ to $B$ form a matching, and hence a $C_4$ with one
edge from each triangle.   Again, the chordality of $G$
implies that at least one of the diagonals of the $C_4$
exists, and thus, the result follows from Lemma
\ref{lem:F6} (right side in Figure \ref{fig:LemaF0}).

\begin{figure}
\centering
	\subfloat[First case]{
  \includegraphics[height=4cm]{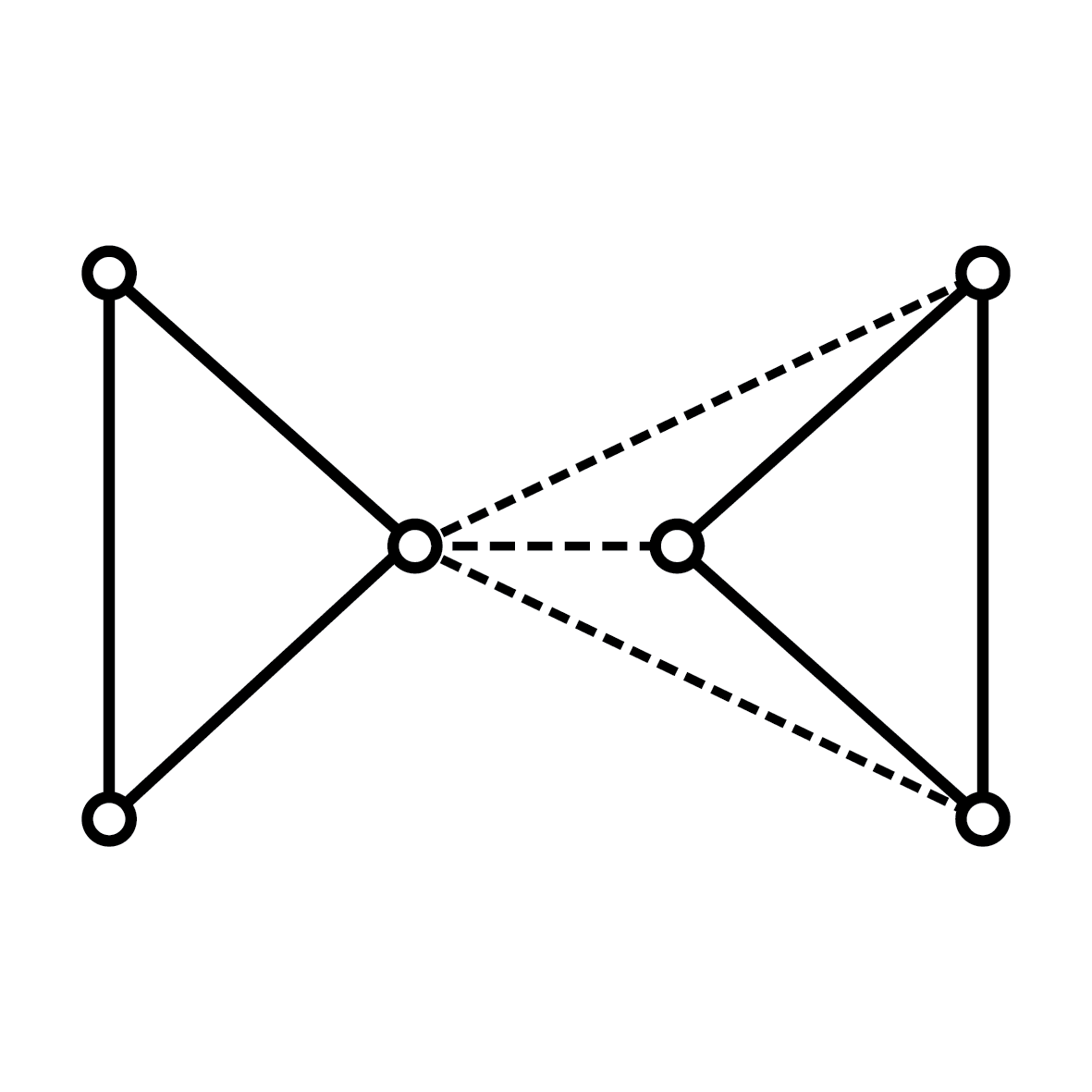}
		\put(-95, 53){$A$}
		\put(-28, 53){$B$}
		\put(-75,63){$v_0$}
	}%
	\hspace{1cm}
	\centering
	\subfloat[Second case]{
	\includegraphics[height=4cm]{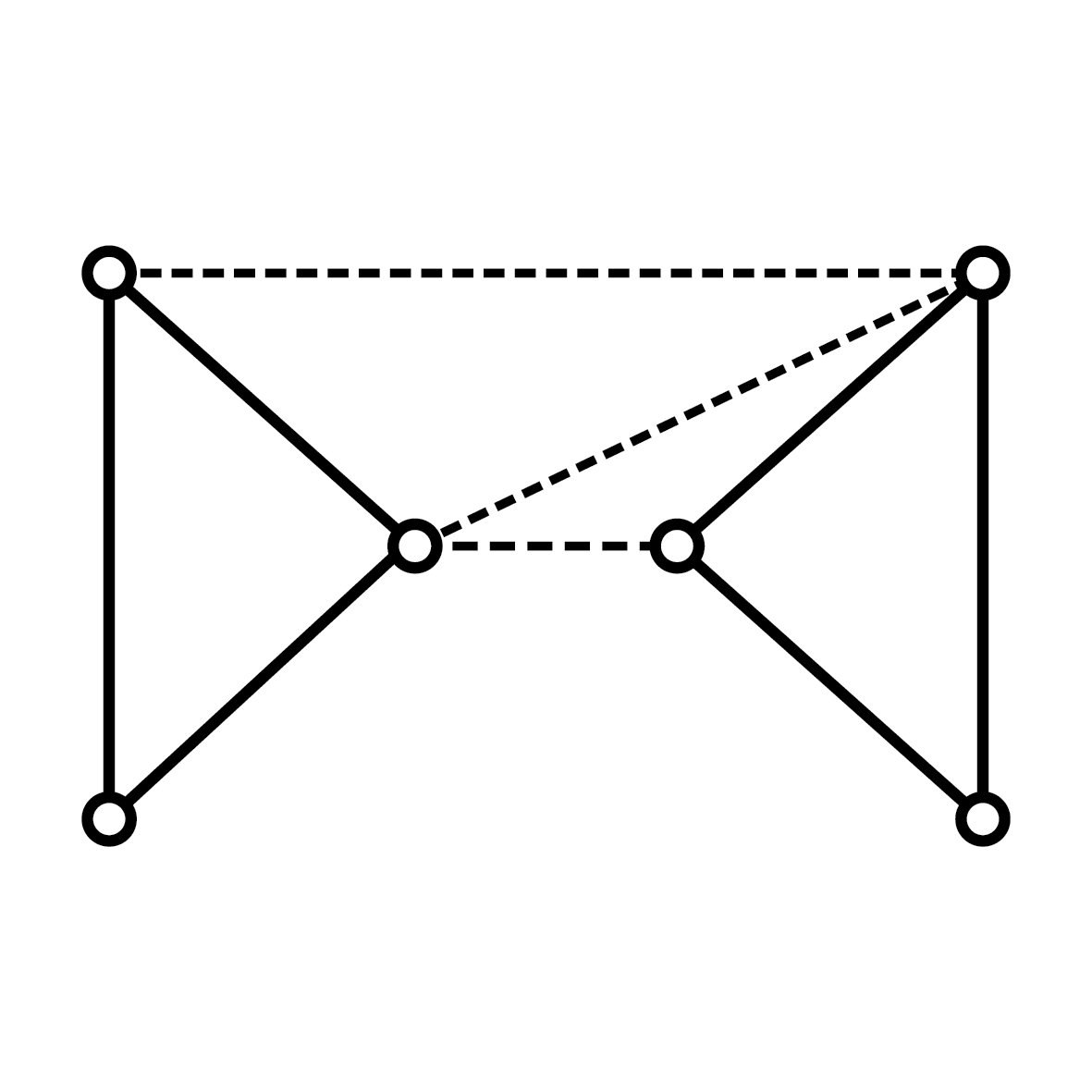}
		\put(-95, 53){$A$}
		\put(-28, 53){$B$}
	}%
	\caption{Cases for Lemma \ref{lem:F0}.}
	\label{fig:LemaF0}
\end{figure}
\end{proof}

The following result characterizes chordal graphs
which are bipartizable by removing a single vertex,
in terms of forbidden subgraphs.

\begin{lemma}
\label{lem:Bipart}
If $G$ is a chordal graph, then there exists a vertex
$v_0 \in V$ such that $G-v_0$ is bipartite if and only
if $G$ contains neither $F_0$, $F_5$, nor $F_7$ as a
subgraph (not necessarily induced).
\end{lemma}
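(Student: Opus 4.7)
In a chordal graph, being bipartite is equivalent to being triangle-free, since any odd cycle admits a chord that splits it into a shorter odd cycle, eventually producing a triangle. Hence $G - v_0$ is bipartite iff $v_0$ meets every triangle of $G$, and the lemma reduces to: a chordal graph $G$ has a vertex contained in every triangle iff $G$ contains none of $F_0, F_5, F_7$ as a subgraph. For the forward direction, in each of these three graphs one can exhibit a collection of triangles with empty common intersection (the two disjoint triangles in $F_0$; the four triangles of $F_7 = K_4$; the three ``outer'' triangles of $F_5$), so a $v_0$ meeting every triangle of $G$ would have to do so inside such a subgraph, which is impossible.

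For the backward direction I first show that every three triangles $T_1, T_2, T_3$ share a vertex. By $F_0$-freeness, the pairwise intersections are nonempty. Assume for contradiction that $T_1 \cap T_2 \cap T_3 = \emptyset$. If some pair, say $T_1 \cap T_2$, has two elements $\{u,v\}$, write $T_1 = \{u,v,x\}$, $T_2 = \{u,v,y\}$; neither $u$ nor $v$ can belong to $T_3$ (otherwise it would lie in the triple intersection), so the nonempty $T_3 \cap T_1$ and $T_3 \cap T_2$ force $x, y \in T_3$, and $\{u,v,x,y\}$ spans $K_4 = F_7$. Otherwise every pair of triangles shares exactly one vertex; these three shared vertices must be pairwise distinct (else one of them is in every $T_i$), and the six-vertex subgraph one obtains, with the three pairwise-shared vertices forming a central triangle and the three private vertices attached as ``pendants'', is precisely $F_5$. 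Both alternatives contradict the hypothesis, so every triple of triangles has a common vertex.

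To promote ``every triple of triangles intersects'' to ``all triangles intersect,'' let $\mathcal{T}$ be the family of all triangles and assume $\bigcap \mathcal{T} = \emptyset$. Pick a minimal subfamily $\mathcal{T}' = \{T_1, \ldots, T_m\}$ with empty common intersection; by the previous step $m \geq 4$. Minimality provides, for each $j$, a vertex $a_j \in \bigcap_{i \neq j} T_i$. Since $T_\ell$ contains every $a_j$ with $j \neq \ell$, we have $m - 1 \leq 3$, forcing $m = 4$ and $a_1, a_2, a_3, a_4$ pairwise distinct (otherwise some $a_j$ would lie in the deleted triangle as well). Any two of these $a_j$'s lie in a common triangle of $\mathcal{T}'$ and are therefore adjacent, so $\{a_1, a_2, a_3, a_4\}$ induces $K_4 = F_7$, again a contradiction. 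Hence some $v_0$ meets every triangle, and $G - v_0$ is triangle-free, hence bipartite.

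The main obstacle is the triple-intersection step, which must channel the failure of a common vertex for three triangles into exactly one of the two non-trivial forbidden configurations $F_5$ or $F_7$; the only real choice in the case split is whether some pair of the three triangles shares an edge or not. Once this step is in hand, the lift to all triangles is a short minimal-counterexample argument that produces $K_4$ from four ``defective'' triangles on four vertices.
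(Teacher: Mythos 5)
Your proof is correct, but it is organized quite differently from the paper's. The paper argues directly by contradiction: assuming $G-v$ is non-bipartite for every $v$, it extracts two triangles $A$ and $B$ (which must intersect by $F_0$-freeness), then uses the non-bipartiteness of $G$ minus a shared vertex to produce a third triangle avoiding that vertex, and a short case analysis on the intersection pattern yields $F_5$ or $F_7$. You instead prove a Helly-type theorem for the family of triangles: first, any three pairwise-intersecting triangles with empty common intersection force $F_7$ (if some pair shares an edge) or $F_5$ (if all pairs share single, distinct vertices) --- this case analysis is essentially the same as the paper's core computation --- and second, the minimal-counterexample argument showing that $3$-wise intersection implies global intersection, with the only possible failure being four triangles on four mutually adjacent vertices, i.e.\ $F_7=K_4$. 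Your route is longer by one step but buys something genuine: the statement ``every triangle of $G$ contains a common vertex whenever $G$ has no $F_0$, $F_5$, or $F_7$ subgraph'' is proved for arbitrary graphs, with chordality used only at the very end to convert ``$G-v_0$ is triangle-free'' into ``$G-v_0$ is bipartite'' (and, in the forward direction, to reduce bipartiteness to hitting all triangles). The paper's proof, by contrast, re-invokes chordality each time it needs to extract a triangle from a non-bipartite subgraph, and is correspondingly shorter but less modular. All the details of your argument check out, including the distinctness of the $a_j$'s and the bound $m-1\le 3$ in the lifting step.
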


\begin{proof} The result is clear if $G$ contains a
copy of $F_0$, $F_5$ or $F_7$.   For the converse we
will proceed by contradiction.

Aiming for a contradiction, suppose that $G$ contains
neither $F_0$, $F_5$ nor $F_7$ as subgraphs, and for
every vertex $v$ of $G$, we have that $G-v$ is not
bipartite. It follows that $G$ contains at least two
different triangles $A$ and $B$.   Since $G$ does not
contain $F_0$ as an induced subgraph, then $A$ and $B$
must share one, or two vertices.

Suppose first that $A$ and $B$ share a single vertex
$v_0$.   Since $G-v_0$ is not bipartite, then there
is another triangle $C$ in $G$ which does not contain
the vertex $v_0$, but shares vertices with both $A$
and $B$.   If $C$ shares exactly on vertex with $A$,
and exactly on vertex with $B$, then its third vertex
should be a new vertex, neither in $A$ nor in $B$,
which would give us a copy of $F_5$ contained in $G$,
which cannot happen (see leftmost graph in Figure
\ref{fig:LemaBipart}).   If $C$ shares two vertices
with $A$ and a single vertex with $B$, then $G$ would
contain a copy of $F_7$ as a subgraph (center graph in
Figure \ref{fig:LemaBipart}).   Thus, this case is also
impossible.

If $A$ and $B$ share two vertices, say $v_1$ and $v_2$,
then, since $G-v_1$ is not bipartite, there exists a
triangle $C$ in $G$ not using the vertex $v_1$ and such
that it shares two vertices with $A$ and two vertices
with $B$ (otherwise we would be in the previous case).
But for this to happen, there should be an edge joining
the vertices in $A$ and $B$ different from $v_1$ and
$v_2$ (rightmost graph in Figure \ref{fig:LemaBipart}),
which creates a copy of $F_7$, a contradiction.

Since a contradiction is reached in each case, we conclude
that there is a vertex $v_0$ in $G$ such that $G-v_0$ is
bipartite.

\begin{figure}
	\centering
	\subfloat{
		\includegraphics[height=4cm]{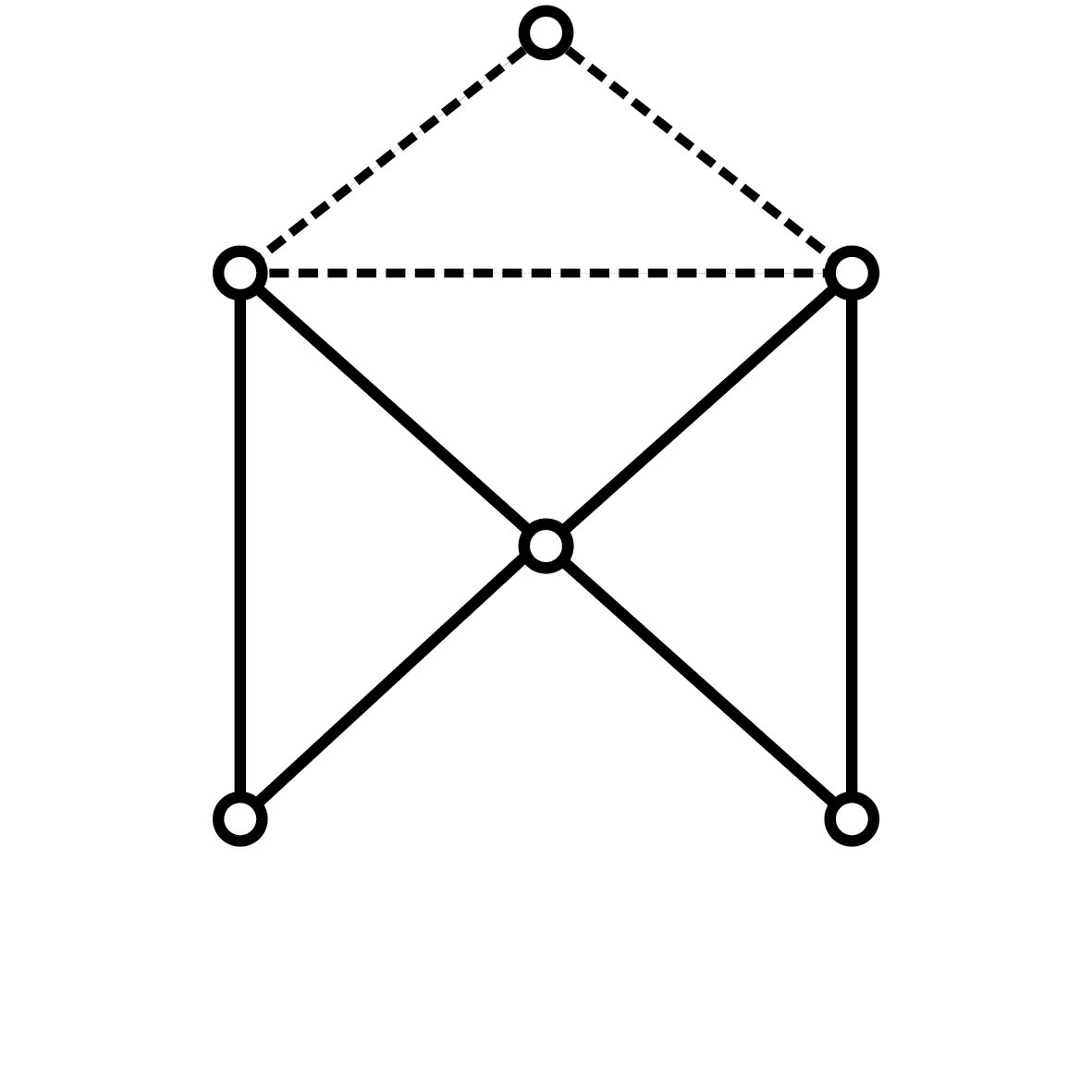}
			\put(-83, 53){$A$}
			\put(-42, 53){$B$}
			\put(-61, 92){$C$}
			\put(-61, 45){$v_0$}
	}%
	\hfill
	\centering
	\subfloat{
		\includegraphics[height=4cm]{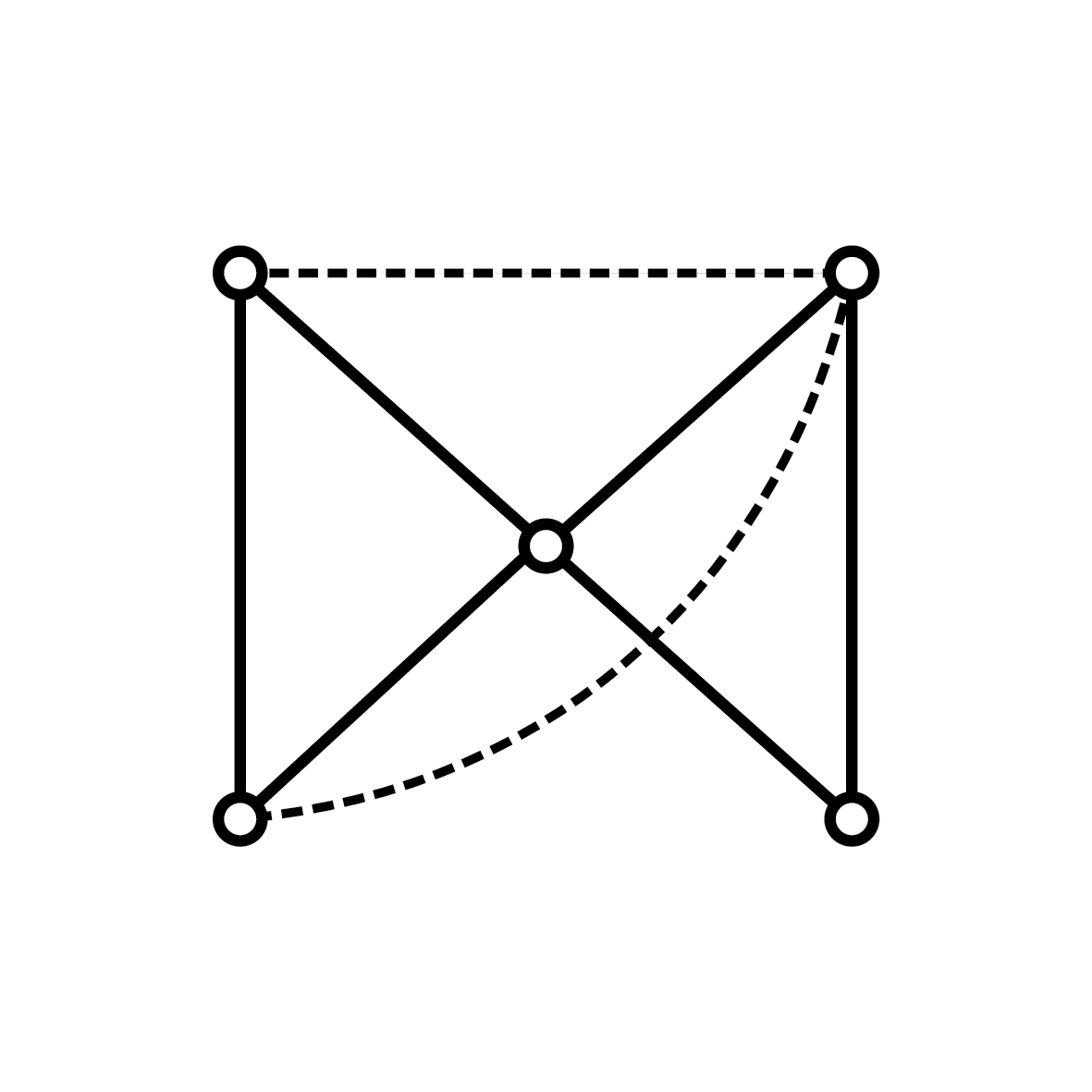}
			\put(-83, 53){$A$}
			\put(-42, 53){$B$}
			\put(-61, 70){$C$}
			\put(-61, 45){$v_0$}
	}%
	\hfill
	\centering
	\subfloat{
		\includegraphics[height=4cm]{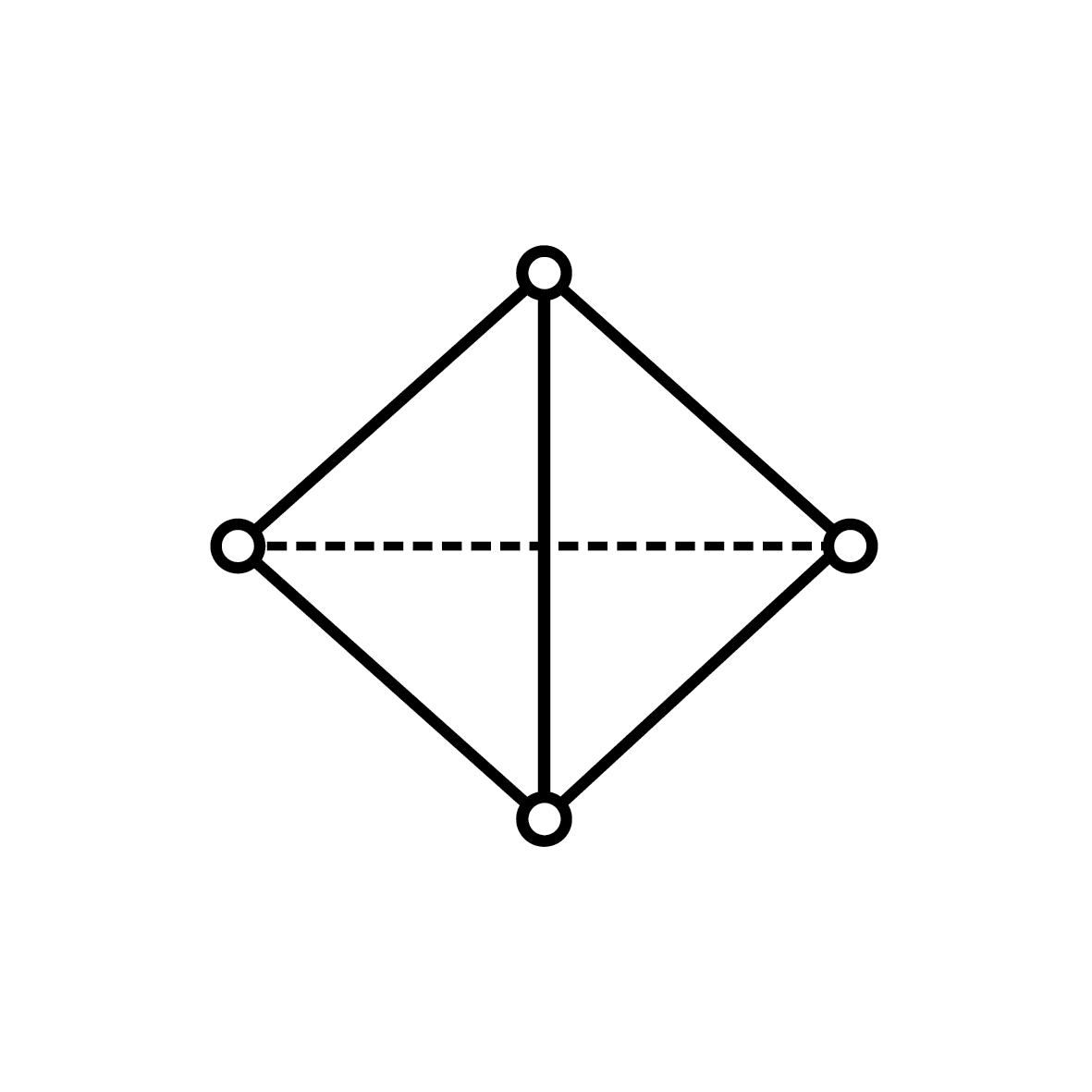}
			\put(-76, 60){$A$}
			\put(-50, 60){$B$}
			\put(-61, 92){$v_1$}
			\put(-61, 18){$v_2$}
	}%
	\vspace{-0.75cm}
	\caption{Cases for Lemma \ref{lem:Bipart}.}
	\label{fig:LemaBipart}
\end{figure}
\end{proof}

Notice that Lemma \ref{lem:Bipart} can be easily
adapted to be stated in terms of forbidden induced
subgraphs, as our next result shows.   Besides the
graphs in family $\mathcal{F}$, we need two additional
graphs, $F_{01}$ and $F_{02}$, depicted in Figure
\ref{fig:F01F02}.

\begin{figure}
	\centering
	\subfloat{
		\includegraphics[height=4cm]{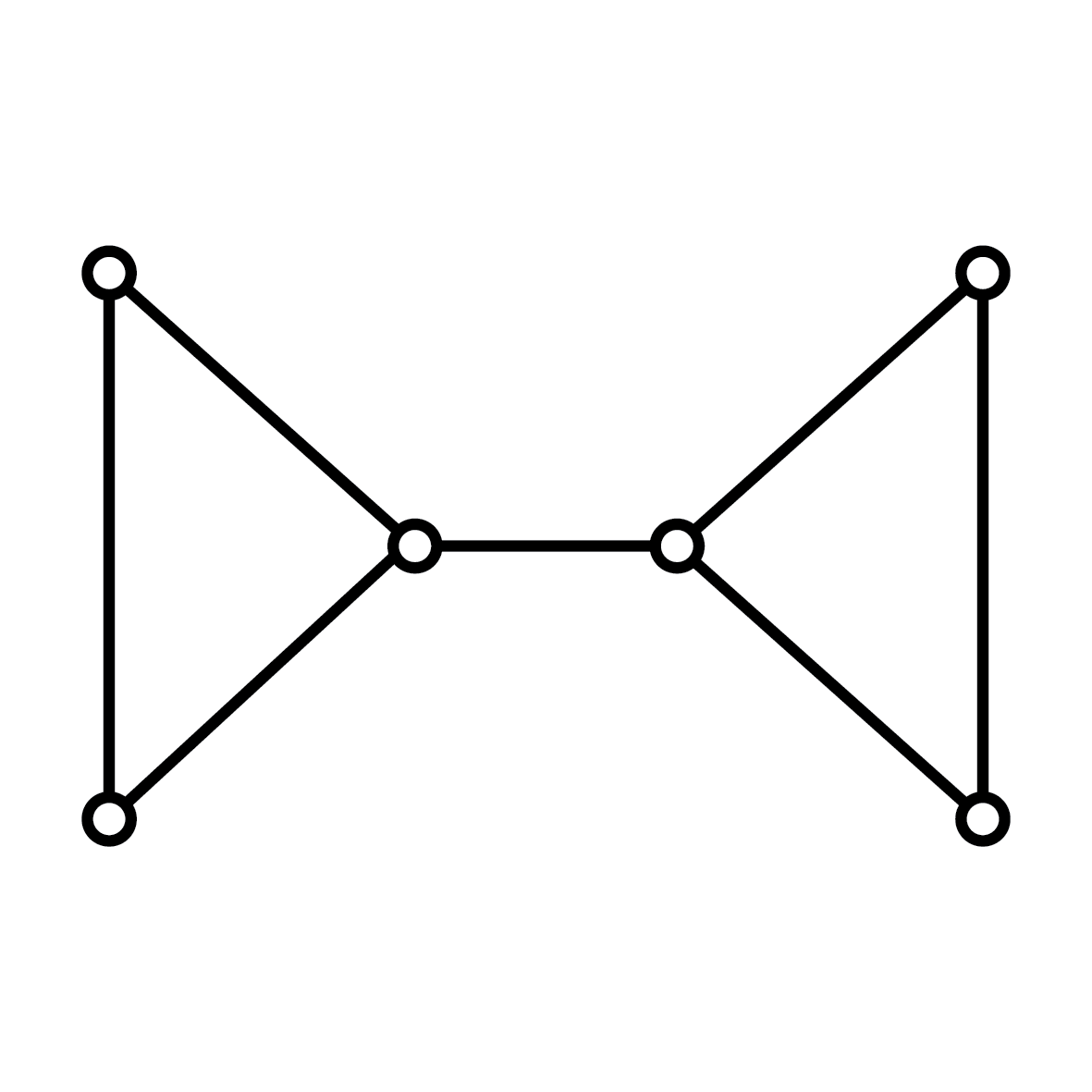}
	}%
	\hspace{1cm}
	\centering
	\subfloat{
		\includegraphics[height=4cm]{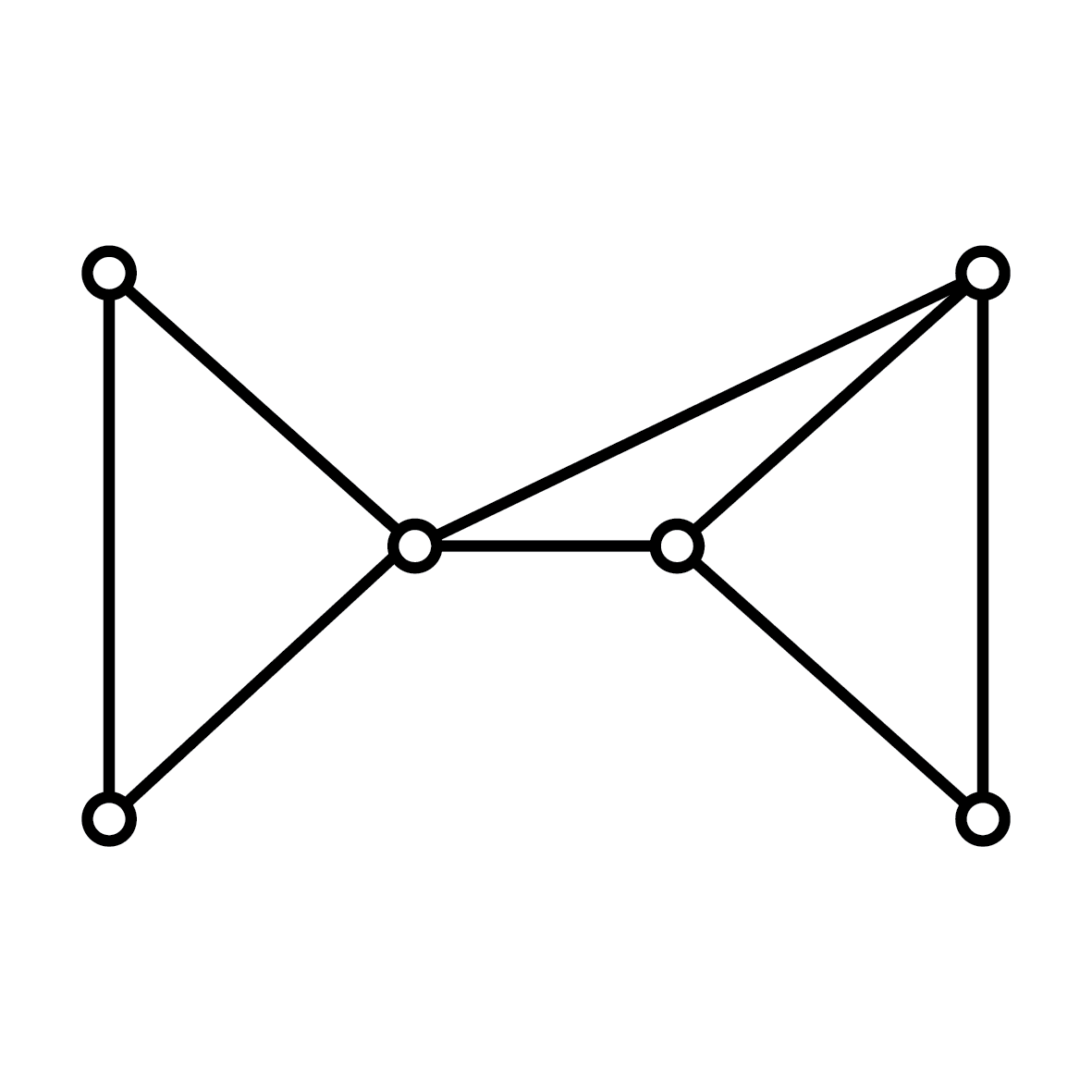}
	}%
	\vspace{-0.75cm}
	\caption{Graphs $F_{01}$ (left) and $F_{02}$
	(right) for Lemma \ref{lem:BipartInduc}.}
	\label{fig:F01F02}
\end{figure}

\begin{lemma}
\label{lem:BipartInduc}
If $G$ is a chordal graph, then there exists a vertex
$v_0 \in V$ such that $G-v_0$ is bipartite if and only
if $G$ is $\{ F_5, F_6, F_7, F_0, F_{01}, F_{02}
\}$-free.
\end{lemma}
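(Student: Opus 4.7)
The plan is to transfer Lemma~\ref{lem:Bipart} from a ``not necessarily induced'' subgraph characterization to an induced-subgraph one, exploiting the fact that the property ``there exists $v_0$ such that $G-v_0$ is bipartite'' is hereditary under induced subgraphs (if $v_0 \in V(H)$ then $H-v_0$ is an induced subgraph of the bipartite graph $G-v_0$, and otherwise $H$ itself sits inside $G-v_0$). The new graphs $F_{01}$ and $F_{02}$ are introduced precisely to refine the conclusion of Lemma~\ref{lem:F0} so as not to collapse the ``$F_0$ plus few edges'' configurations into the coarser graph $F_1$.

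For the forward implication I would first invoke this hereditary property, so that it suffices to check that every one of the six graphs in the list violates the bipartization property on its own. By Lemma~\ref{lem:Bipart} this reduces to verifying that each of $F_5, F_6, F_7, F_0, F_{01}, F_{02}$ contains $F_0$, $F_5$, or $F_7$ as a (not necessarily induced) subgraph. Three of these are immediate, and $F_6, F_{01}, F_{02}$ are all built as two triangles with some extra edges between them, so each trivially contains $F_0 = 2K_3$ as a spanning subgraph.

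For the converse, suppose no vertex removal makes $G$ bipartite. Lemma~\ref{lem:Bipart} gives one of $F_0$, $F_5$, $F_7$ as a subgraph of $G$. If $F_5$ is a subgraph then Lemma~\ref{lem:F5} supplies an induced $F_5$ or $F_7$; if $F_7 = K_4$ is a subgraph it is automatically induced since $K_4$ has all possible edges. The substantive case is $F_0$ as a subgraph, and here I would redo the case analysis from the proof of Lemma~\ref{lem:F0}, but tracking the exact configuration rather than collapsing it into $F_1$. Letting $A$ and $B$ be the two triangles, I would split according to the number of edges between $A$ and $B$: with $0$ edges $F_0$ itself is induced; with exactly $1$ edge $F_{01}$ is induced; with exactly $2$ edges chordality forbids a matching (else a chordless $C_4$), so the two edges share a common endpoint and $F_{02}$ is induced; with $3$ or more edges the argument already carried out in Lemma~\ref{lem:F0} produces either an induced $F_7$ (when three of the edges meet at a vertex of $A$) or, via the chord of the resulting $C_4$ and Lemma~\ref{lem:F6}, an induced $F_6$ or $F_7$.

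The main obstacle, which is really bookkeeping rather than conceptual, is to identify $F_{01}$ and $F_{02}$ unambiguously with the ``$F_0$ plus one edge'' and ``$F_0$ plus two edges sharing a vertex'' configurations; in the 2-edge case one has to check that the configuration is unique up to isomorphism, which follows from the vertex-transitivity of $K_3$ (any choice of shared endpoint and its two neighbors in the opposite triangle gives the same graph). Once these identifications are made, everything reduces to an application of Lemmas~\ref{lem:Bipart}, \ref{lem:F5}, \ref{lem:F6} and \ref{lem:F0}.
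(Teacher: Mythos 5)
Your proposal is correct and follows essentially the same route as the paper: both directions reduce to Lemma~\ref{lem:Bipart} by showing that freeness from the six listed induced subgraphs is equivalent (for chordal graphs) to containing none of $F_0$, $F_5$, $F_7$ as a subgraph, using Lemmas~\ref{lem:F5}, \ref{lem:F6} and the case analysis of Lemma~\ref{lem:F0}. You merely spell out two points the paper leaves implicit --- the hereditary argument for the ``trivial'' direction and the $0$/$1$/$2$-edge refinement yielding $F_0$, $F_{01}$, $F_{02}$ --- and both are handled correctly.
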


\begin{proof}
We will only prove the non-trivial implication.   Suppose
that $G$ is a chordal $\{ F_5, F_6, F_7, F_0, F_{01},
F_{02} \}$-free graph.   As $G$ is $\{ F_5, F_7 \}$-free,
it follows from Lemma \ref{lem:F5}, that $G$ does not
contain $F_5$ as a subgraph.   Since $F_7$ is complete,
having it as an induced subgraph is equivalent to having
it as a subgraph.

In the proof of Lemma \ref{lem:F0}, in the case where
there are at least three edges between $A$ and $B$, we
concluded that $G$ contains an induced copy of either
$F_6$ or $F_7$.   It is not hard to verify that in the
case where there are at most two edges, $G$ must contain
an induced copy of either $F_0, F_{01}$ or $F_{02}$.

Thus, if $G$ is $\{ F_5, F_6, F_7, F_0, F_{01},
F_{02} \}$-free, then it contains neither $F_0, F_5$ nor
$F_7$ as a subgraph. The desired result follows from
Lemma \ref{lem:Bipart}.
\end{proof}

Clearly, if $F_1$ is not an induced subgraph of $G$,
then neither $F_0,F_{01}$ and $F_{02}$ are. Thus, a
direct application of Lemma \ref{lem:BipartInduc}
produces the next result.

\begin{lemma}
\label{lem:equiv}
If $G$ is a chordal graph, then $G$ is $\mathcal{F}$-free
if and only if $G$ is $(\{ F_1, F_2, F_3, F_4 \} \cup
\mathcal{F}_1$)-free and there exists a vertex $v_0$ in
$G$ such that $G-v_0$ is bipartite.
\end{lemma}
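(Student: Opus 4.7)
The plan is to treat this lemma as essentially a bookkeeping consequence of Lemma \ref{lem:BipartInduc}, combined with the elementary observation (stated just before the lemma) that $F_1$ is an induced subgraph of each of $F_0$, $F_{01}$ and $F_{02}$. The key decomposition is $\mathcal{F} = (\{F_1,F_2,F_3,F_4\} \cup \mathcal{F}_1) \cup \{F_5, F_6, F_7\}$, so being $\mathcal{F}$-free splits cleanly into two sub-conditions, one of which already appears verbatim in the statement. The remaining task is to show that, in a chordal $F_1$-free graph, being $\{F_5, F_6, F_7\}$-free is equivalent to the existence of a vertex $v_0$ with $G - v_0$ bipartite; this is exactly what Lemma \ref{lem:BipartInduc} provides, once the $\{F_0, F_{01}, F_{02}\}$-freeness is handled for free via $F_1$-freeness.

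For the forward implication I would assume $G$ is chordal and $\mathcal{F}$-free. Freeness from $\{F_1, F_2, F_3, F_4\} \cup \mathcal{F}_1$ is immediate from the definition of $\mathcal{F}$. Since $F_1$ occurs as an induced subgraph of each of $F_0$, $F_{01}$ and $F_{02}$, the hypothesis of $F_1$-freeness automatically gives $\{F_0, F_{01}, F_{02}\}$-freeness, and freeness from $\{F_5, F_6, F_7\}$ is part of the hypothesis. Therefore $G$ is $\{F_5, F_6, F_7, F_0, F_{01}, F_{02}\}$-free, and Lemma \ref{lem:BipartInduc} supplies a vertex $v_0 \in V(G)$ such that $G - v_0$ is bipartite.

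For the converse I would assume $G$ is chordal, $(\{F_1, F_2, F_3, F_4\} \cup \mathcal{F}_1)$-free, and admits a vertex $v_0$ whose deletion leaves a bipartite graph. It then remains to rule out induced copies of $F_5$, $F_6$ and $F_7$. But the trivial direction of Lemma \ref{lem:BipartInduc} tells us that the existence of such a $v_0$ already forces $G$ to be $\{F_5, F_6, F_7, F_0, F_{01}, F_{02}\}$-free, and in particular $\{F_5, F_6, F_7\}$-free. Combining with the assumed freeness from $\{F_1, F_2, F_3, F_4\} \cup \mathcal{F}_1$, we conclude that $G$ is $\mathcal{F}$-free.

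There is no substantial technical obstacle here: the whole argument is a repackaging of Lemma \ref{lem:BipartInduc}, and the only point that requires any care is flagging that $F_1$ sits as an induced subgraph inside $F_0$, $F_{01}$ and $F_{02}$, which lets the additional ``small'' obstructions in Lemma \ref{lem:BipartInduc} be absorbed into the single condition of $F_1$-freeness. If anything, the mild subtlety is remembering that the easy direction of Lemma \ref{lem:BipartInduc} is what drives the converse, rather than its harder direction.
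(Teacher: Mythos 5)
Your proof is correct and follows essentially the same route as the paper, which likewise derives the lemma directly from Lemma \ref{lem:BipartInduc} together with the observation that $F_1$-freeness already forbids $F_0$, $F_{01}$ and $F_{02}$ as induced subgraphs. The only difference is presentational: the paper compresses the whole argument into the two-sentence remark preceding the lemma, whereas you spell out both implications explicitly.
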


%%%%%%%%%%%%%%%%%%%%%%%%%%%%%%%%%%%%%%%%%%%%%%%%%%%%%%%%%%%%
%%%%%%%%%%%%%%%%%%%%%%%%%%%%%%%%%%%%%%%%%%%%%%%%%%%%%%%%%%%%
%%%%%%%%%%%%%%%%%%%%%%%%%%%%%%%%%%%%%%%%%%%%%%%%%%%%%%%%%%%%
%%%%%%%%%%%%%%%%%%%%%%%%%%%%%%%%%%%%%%%%%%%%%%%%%%%%%%%%%%%%
%%%%%%%%%%%%%%%%%%%%%%%%%%%%%%%%%%%%%%%%%%%%%%%%%%%%%%%%%%%%

\section{Main result}
\label{sec:main}

For a chordal graph $G$ we define the set
$\mathcal{B}_G$ as
\[
	\mathcal{B}_G = \{ v \in V \colon\ G-v
	\textnormal{ is bipartite} \},
\]
then every element of $\mathcal{B}$ must belong to
every triangle in $G$, and hence, if $G$ is not
bipartite, then $|\mathcal{B}_G| \le 3$.

Notice that if a graph $G$ admits an $M_1$-partition,
then any graph obtained from $G$ by adding isolated
vertices also admits an $M_1$-partition; it suffices
to place all the isolated vertices in $V_1$, the
independent set without further restrictions.   Also,
if $G$ is a chordal, non-bipartite, $\mathcal{F}$-free
graph, then at least one component of $G$ contains a
triangle, and being $F_1$-free, every other component
of $G$ must be an isolated vertex.   Thus, in order to
prove our main result, we may only consider connected
graphs.

We are now ready to prove our main result.  We will
use Lemma \ref{lem:equiv} to prove a slightly different
equivalent form of Theorem \ref{thm:M3}.

\begin{theorem}
\label{thm:MainTheorem}
If $G$ is a chordal graph, then $G$ admits an
$M_1$-partition if and only if $G$ is $(\{ F_1, F_2,
F_3, F_4 \} \cup \mathcal{F}_1)$-free and there exists
a vertex $v_0 \in V$ such that $G-v_0$ is bipartite.
\end{theorem}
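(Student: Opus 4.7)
The plan is to prove the two implications separately, with the backward direction being the main work. For necessity, I use the fact that the $M_1$-partition property is hereditary, so it suffices to verify that each of $F_1,\dots,F_4$ and each member of $\mathcal{F}_1$ fails to admit an $M_1$-partition; this is a finite check for $F_1,\dots,F_4$, while for $\mathcal{F}_1$ the apex vertex is forced into the singleton side of the star $V_2\cup V_3$ and the odd path then produces a coloring conflict. I would also observe that for any $M_1$-partition $(V_1,V_2,V_3)$ with both $V_2,V_3$ nonempty, the subgraph induced by $V_2\cup V_3$ is a chordal complete bipartite graph, hence a star $K_{1,q}$, whose center bipartizes $G$; when some part is empty, $G$ is already bipartite, so a bipartizing vertex always exists.

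For sufficiency, assume $G$ is chordal, $(\{F_1,F_2,F_3,F_4\}\cup\mathcal{F}_1)$-free, and has $v_0\in\mathcal{B}_G$. If $G$ is bipartite the partition is immediate, so assume $G$ has a triangle. Then $G-v_0$ is chordal and bipartite, hence a forest, with tree components $T_1,\dots,T_m$; set $S_i := N(v_0)\cap T_i$. Chordality of $G$ forces each $S_i$ to induce a subtree of $T_i$: any two neighbors of $v_0$ in $T_i$ span a cycle through $v_0$, and filling in chords forces $v_0$ to be adjacent to every vertex on the $T_i$-path between them. I would then try the star-center $V_2=\{v_0\}$, taking $V_3$ to be one color class of a bipartition of $G-v_0$ (coloring each tree independently); this works iff for every $i$, one color class of $T_i$'s bipartition is contained in $S_i$, which reduces to (i)~every vertex of $T_i\setminus S_i$ being a leaf of $T_i$, and (ii)~all such leaves attaching to vertices of $S_i$ of a common color in the bipartition of $S_i$.

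Each failure then matches a forbidden graph: a branch of $T_i\setminus S_i$ containing an edge, together with any triangle of $G$, yields an induced $K_3+K_2$, which we recognize as $F_1$; attachment points at odd distance $\ge 3$ in $S_i$, combined with the two pendant leaves and $v_0$, yield a member of $\mathcal{F}_1$. The hardest case, and the main obstacle, is when condition~(ii) fails at distance exactly~$1$, which produces a bull centered at a triangle $v_0 s_1 s_2$; the natural response is to switch the star-center from $v_0$ to one of $s_1,s_2$. I expect the bulk of the technical work to go into showing that such a switch can be performed consistently across all components of $G-s_i$ simultaneously, using $F_2,F_3,F_4$-freeness to rule out the small configurations that would obstruct every admissible choice of center. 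Since $\mathcal{B}_G$ is a clique of size at most $3$, this should reduce to a finite case analysis on the possible centers in $\mathcal{B}_G$ and the triangles of $G$.
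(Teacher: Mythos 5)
Your necessity argument is fine, and your reduction of sufficiency is the right starting point: a non-bipartite chordal $M_1$-partitionable graph must have a singleton part $\{v_0\}$ with $v_0\in\mathcal{B}_G$, $G-v_0$ is a forest, each $S_i=N(v_0)\cap T_i$ induces a subtree, and the centre $v_0$ works if and only if each component $T_i$ has a colour class contained in $S_i$. However, your classification of the failure modes contains a genuine error. A branch of $T_i\setminus S_i$ containing an edge does \emph{not} always yield an induced $F_1$ (a triangle plus a disjoint, non-adjacent edge): every triangle of $G$ contains $v_0$ together with an edge of some $S_j$, and that triangle may be adjacent to the offending branch. Concretely, take the triangle $v_0uw$ with a path $x,y$ attached at $u$ (edges $ux$ and $xy$). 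Here condition~(i) fails for the centre $v_0$ (the component of $G-v_0$ is the path $w,u,x,y$ with $S=\{u,w\}$, and $x\notin S$ is not a leaf), yet the graph contains no member of $\mathcal{F}$ and is $M_1$-partitionable --- but only with centre $u$. So centre switches are forced not only by the ``distance-$1$'' failure of~(ii) but also by failures of~(i), and $F_1$ appears only when the offending edge can be separated from \emph{every} triangle of $G$, which is an argument you have not supplied.

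More importantly, the step you explicitly defer --- showing that when one centre fails, another vertex of $\mathcal{B}_G$ succeeds, or else one of $F_2$, $F_3$, $F_4$ appears --- is where essentially all of the work lies. The paper carries this out by splitting on $|\mathcal{B}_G|\in\{1,2,3\}$, deriving in each case precise global structure (bounded heights of the trees rooted at the vertices of $\mathcal{B}_G$; in the case $|\mathcal{B}_G|=1$, that $v_0$ has eccentricity $2$, that the second level consists of pendant vertices, and that their attachment points lie in a common colour class of each component of the first level), and then writing down an explicit colouring; each of $F_2$, $F_3$, $F_4$ and $\mathcal{F}_1$ enters at a specific point of that analysis to exclude a specific configuration. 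Your proposal never identifies where $F_2$, $F_3$, $F_4$ actually arise, so the ``finite case analysis'' you invoke is not a routine verification but the core of the proof; as written, the sufficiency direction is incomplete.
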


\begin{proof}
Let $G$ be a graph with an $M_1$-partition $(V_1,
V_2, V_3)$ (recall that $V_2$ and $V_3$ are completely
adjacent).   If $G$ is bipartite, then the conditions
of the theorem clearly hold.   Else, $V_i$ is non-empty
for $i \in \{ 1, 2, 3 \}$.   If $|V_2|, |V_3| \ge 2$,
then $G$ would contain a chordless $4$-cycle, contradicting
the chordality of $G$.   Thus, we will assume without loss
of generality that $V_3 = \{ v_3 \}$.   Hence, $G-v_3$ is
a bipartite graph.

For the remaining condition, it suffices to verify that
none of $F_1, F_2, F_3, F_4$, nor any graph in
$\mathcal{F}_1$ admits an $M_1$-partition.   These
verifications are simple yet tedious, so we will omit
them.   This concludes the proof for the necessity.

For the suficiency, if $G$ is bipartite, then it clearly
admits an $M_1$-partition.   Else, as we discussed at
the beginning of this section, we can assume that $G$
is connected.   Since $1 \le |\mathcal{B}_G| \le 3$ we
will consider three cases, one for each possible
cardinality of $\mathcal{B}_G$.   We are looking for
an $M_1$-partition $(V_1, V_2, V_3)$ as described in
the Introduction; we will describe such a partition by
by colouring the vertices of $G$ with colours $\{ 1,
2, 3 \}$, which correspond to the parts of the partition.

{\bf Case 1:} If $\mathcal{B}_G = \{ v_0,v_1,v_2 \}$,
then, as each of $v_0, v_1$ and $v_2$ must be in every
triangle of $G$, then there is only one triangle in $G$,
namely the one induced by $\mathcal{B}_G$.

Since $G$ is chordal, $G-v_i$ is acyclic for every $i
\in \{ 0, 1, 2 \}$, as otherwise there would be a
triangle not using $v_i$.   Let $G_i$ be the connected
component of $G - \{ v_j, v_k \}$ where $\{ i, j, k \}
= \{ 0, 1, 2 \}$.   Then, $G_i$ is a tree for $i \in
\{ 0, 1, 2 \}$, and since $G$ is $F_2$-free, then $V_{G_i}
= \{ v_i \}$ for at least one $i \in \{ 0, 1, 2 \}$.
Assume without loss of generality that $V_{G_0} =
\{ v_0 \}$.

If we consider $G_i$ as a tree rooted at $v_i$, then
the fact that $G$ is $F_3$-free implies that $G_1$
and $G_2$ cannot both have height greater than $1$.
Suppose without loss of generality that $G_2$ has
height less than $2$.   If $G_1$ has height greater
than $1$, then it must be $2$, otherwise $G$ would
contain $F_1$ as an induced subgraph.   See the
leftmost graph in Figure \ref{fig:MainTheorem}
for a depiction of the current configuration.

\begin{figure}
	\centering
	\subfloat{
		\includegraphics[height=4cm]{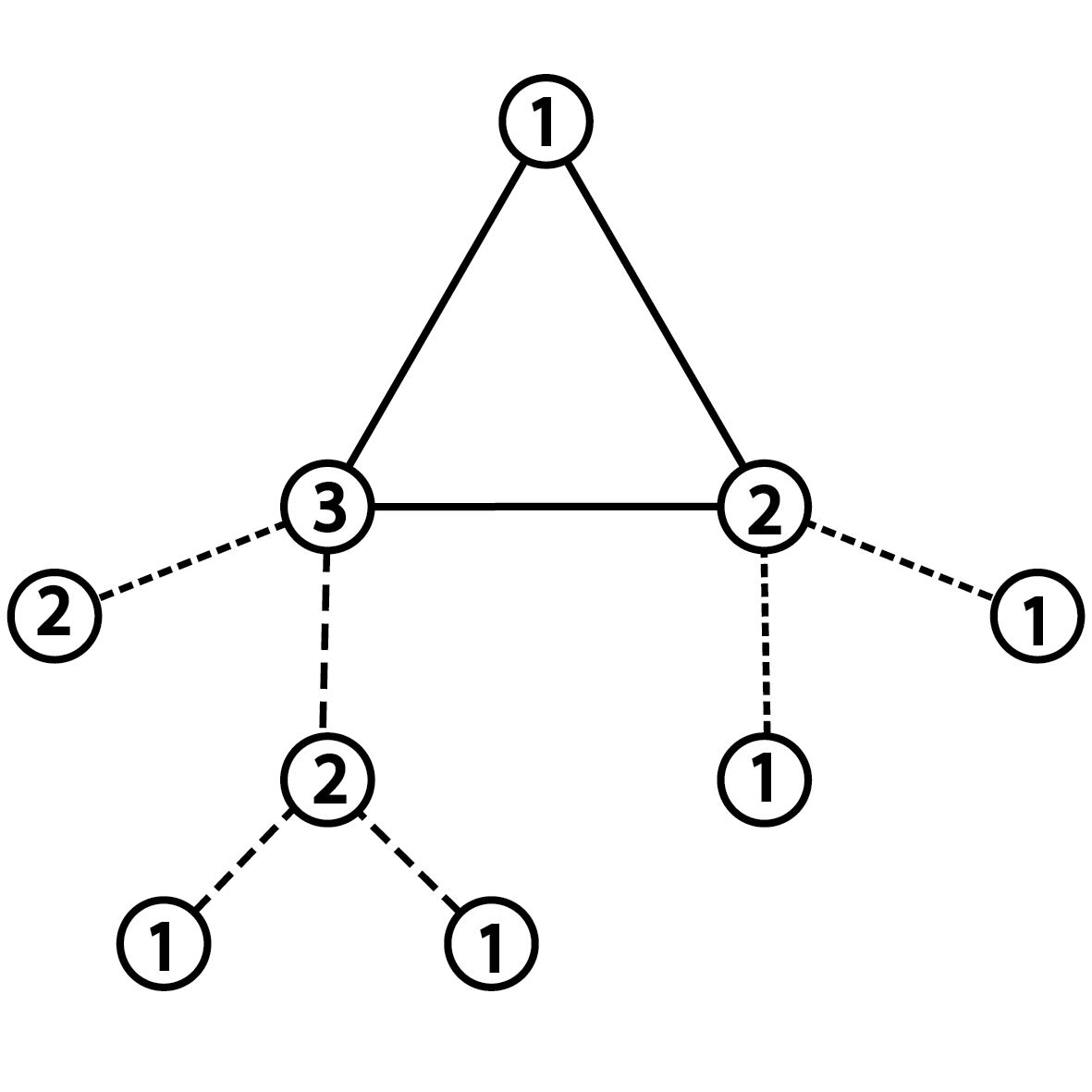}
		\put(-63,72){$A$}
		\put(-63,110){$v_0$}
		\put(-92,69){$v_1$}
		\put(-32,69){$v_2$}
	}
	\centering
	\hfill
	\subfloat{
		\includegraphics[height=4.1cm]{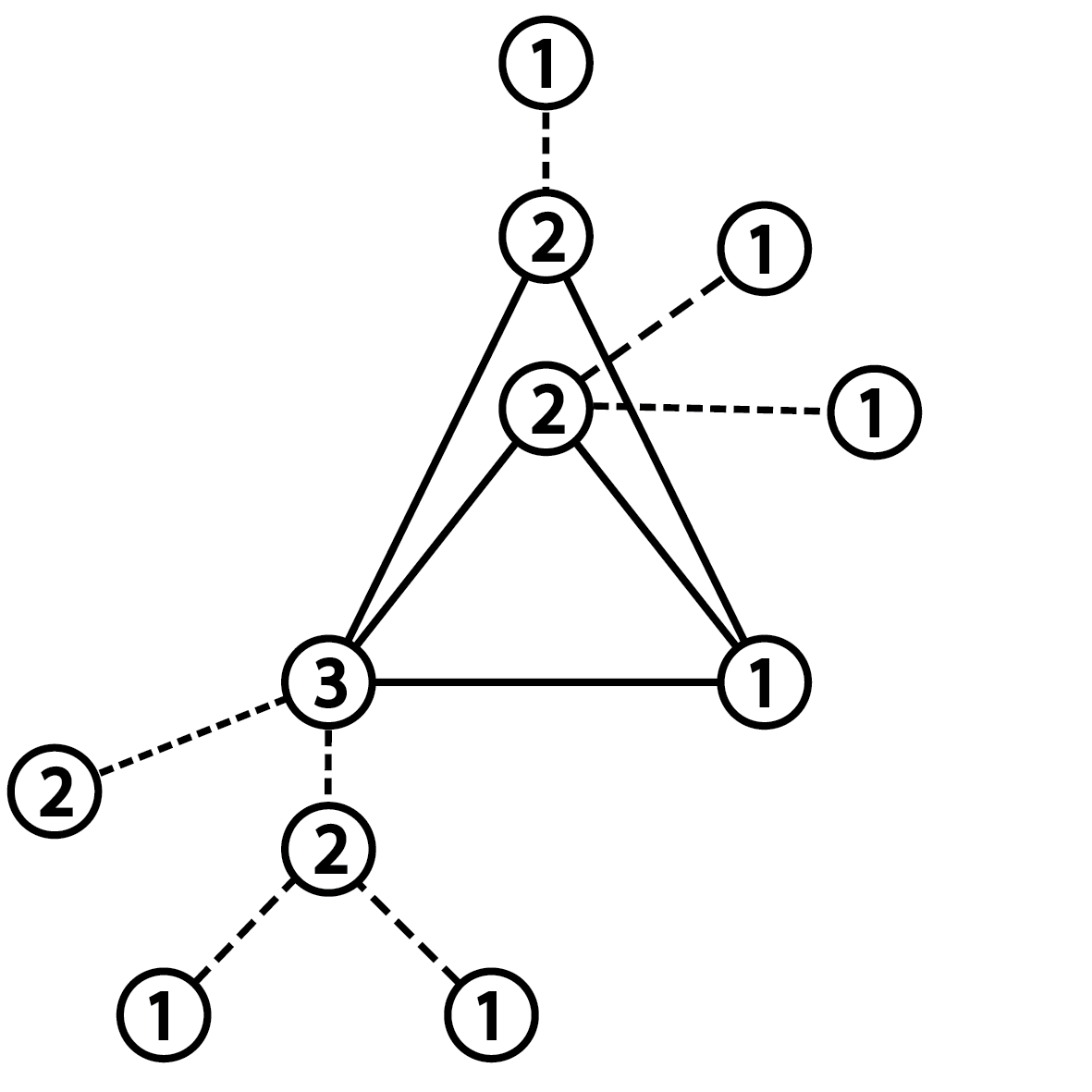}
		\put(-64,46){$A$}
		\put(-94,51){$v_1$}
		\put(-33,51){$v_2$}
		\put(-63,60){$v_0$}
	}
	\centering
	\hfill
	\subfloat{
		\includegraphics[height=4cm]{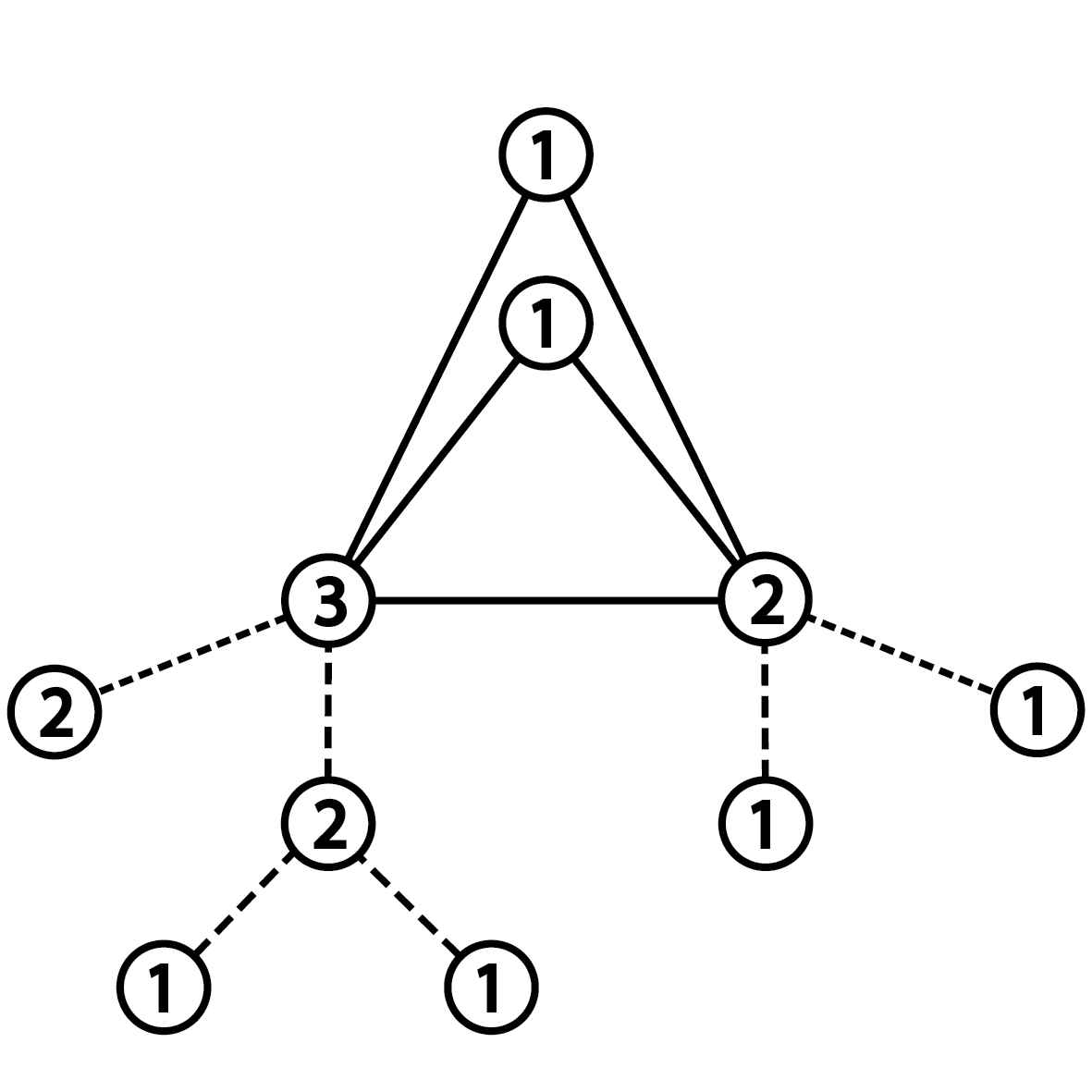}
		\put(-92,59){$v_1$}
		\put(-32,59){$v_2$}
	}
	\caption{Cases for Theorem \ref{thm:MainTheorem}.}
	\label{fig:MainTheorem}
\end{figure}

Thus, we colour $v_0$ with colour $1$, $v_1$ with colour
$3$ and $v_2$ with colour $2$.   The neighbours of $v_2$
in $G_2$ are coloured with $1$, the neighbours of $v_1$
in $G_1$ are coloured $2$, and the vertices at depth $2$
in $G_1$ are coloured $1$.   It follows from the structure
of $G$ in this case that this colouring is an
$M_1$-partition.

{\bf Case 2:} Suppose that $\mathcal{B}_G = \{ v_1, v_2 \}$.
If there were exactly one triangle in $G$, then all of its
vertices would be in $\mathcal{B}_G$, and we would be back
to Case 1.   So, there are at least two different triangles
in $G$.   Also, recall that $v_1$ and $v_2$ are in every
triangle of $G$.   Let $v_0$ be a vertex such that $A =
(v_0, v_1, v_2, v_0)$ is a triangle in $G$.   Let $G_1$
be the connected component of the subgraph of $G$ obtained
by deleting $v_2$ and all the vertices that form a triangle
together with $v_1$ and $v_2$, which contains $v_1$.   The
chordality of $G$ and the definition of $\mathcal{B}_G$
imply that $G_1$ is a tree, and thus we consider $G_1$ to be a
tree rooted at $v_1$.   The subgraph $G_2$ is analogously
defined.   Also analogously, the graph $G - \{ v_1, v_2 \}$
is a forest.   Let $G_v$ be the connected component
containing $v$ in $G - \{ v_1, v_2 \}$, and consider it as
a tree rooted at $v$.   Since there is a triangle in $G$
other than $A$, and $G$ is $F_1$-free, then $G_{v_0}$ (and
$G_v$ for any vertex $v$ forming a triangle together with
$v_1$ and $v_2$) has height at most $1$.

{\bf Case 2.1:} As a first subcase, suppose that $G_{v_0}$
has height $1$.   Since $G$ is $F_2$-free, then $G_1$ or
$G_2$ must have height $0$.   Assume without loss of
generality that $G_2$ has height $0$.   Since $G$ is
$F_1$-free, then $G_1$ has height at most $2$.   See the
center picture in Figure \ref{fig:MainTheorem}.

Thus, colour $v_1$ with colour $3$, $v_2$ with colour $1$,
every vertex $v$ forming a triangle together with $v_1$ and
$v_2$ with colour $2$, and every vertex at depth $1$ in
$G_v$ for every such $v$ with colour $2$.   Also, colour
every vertex in $G_1$ at depth $1$ with colour $2$, and
every at depth $2$ (if any) with colour $1$.   By the
previous analysis of the structure in this case, we have
that this colouring induces an $M_1$-partition of $G$.

{\bf Case 2.2:}   For every vertex $v$ forming a triangle
with $v_1$ and $v_2$, the rooted tree $G_v$ has height $0$.
Since $G$ is $F_3$-free, we assume without loss of generality
that $G_2$ has height $1$, and thus, $G_1$ might have height
$2$ (see the rightmost graph in Figure \ref{fig:MainTheorem}).

Hence, we colour $v_1$ with colour $3$, $v_2$ with colour $2$,
every vertex forming a triangle with $v_1$ and $v_2$ with
colour $1$, every neighbour of $v_2$ other than $v_2$ with
colour $1$, the vertices at depth $1$ in $G_1$ with colour $2$
and the vertices at depth $2$ in $G_1$ with colour $1$.

{\bf Case 3:}   Finally, suppose that $\mathcal{B}_G = \{ v_0 \}$.
As in the previous case, there must be at least two different
triangles in $G$.   We will first show that the eccentricity of
$v_0$ in $G$ is $2$.   To reach a contradiction, suppose that
$v_3$ is a vertex at distance $3$ from $v_0$, and let $(v_0, v_1
v_2, v_3)$ be a path in $G$ realizing this distance.   Since
$v_2$ and $v_3$ are not adjacent to $v_0$, then they are not in
any triangle of $G$, and thus, they belong to no cycle of $G$.
Since $v_1 \notin \mathcal{B}_G$, then there must be a triangle
$A$ in $G-v_1$ (which as mentioned, uses neither $v_2$ nor $v_3$).
Additionally, the edge $v_2 v_3$ is not adjacent to $A$, since
otherwise, as $A$ uses $v_0$, there would be a cycle in $G$ using
$v_2$ or $v_3$.   But now, the vertices of $A$ together with
$v_2$ and $v_3$ induce a copy of $F_1$, a contradiction.   Thus,
the eccentricity of $v_0$ is at most $2$.

Let $L_i$ be the set of all vertices at distance $i$ from $v_0$,
with $i \in \{ 1, 2 \}$.   From the discussion in the previous
paragraph, we have that $V = \{ v_0 \} \cup L_1 \cup L_2$.
Notice that $L_2$ is independent, as otherwise an edge between
vertices of $L_2$ would necessarily belong to a cycle, and hence
to a triangle of $G$, which in turn would imply that its vertices
are adjacent to $v_0$, contradicting the definition of $L_2$.
So, the neighbourhood of every vertex in $L_2$ is contained in
$L_1$.   Moreover, since vertices of $L_2$ do not belong to any
cycle of $G$, then they have degree $1$.   Again, since $v_0$ is
in every triangle of $G$, then the induced subgraph $G[L_1]$ is a
forest, and hence each of its connected components is uniquely
$2$-coloureable.

Suppose that there are vertices $v_1, v_2 \in L_1$ and vertices
$v_3, v_4 \in L_2$ such that $v_1 v_2, v_1 v_3$, and $v_2 v_4$
are edges in $G$.   See Figure \ref{fig:MainTheoremCase3} for
the configuration of this case.

\begin{figure}
\centering
	\includegraphics[height=5cm]{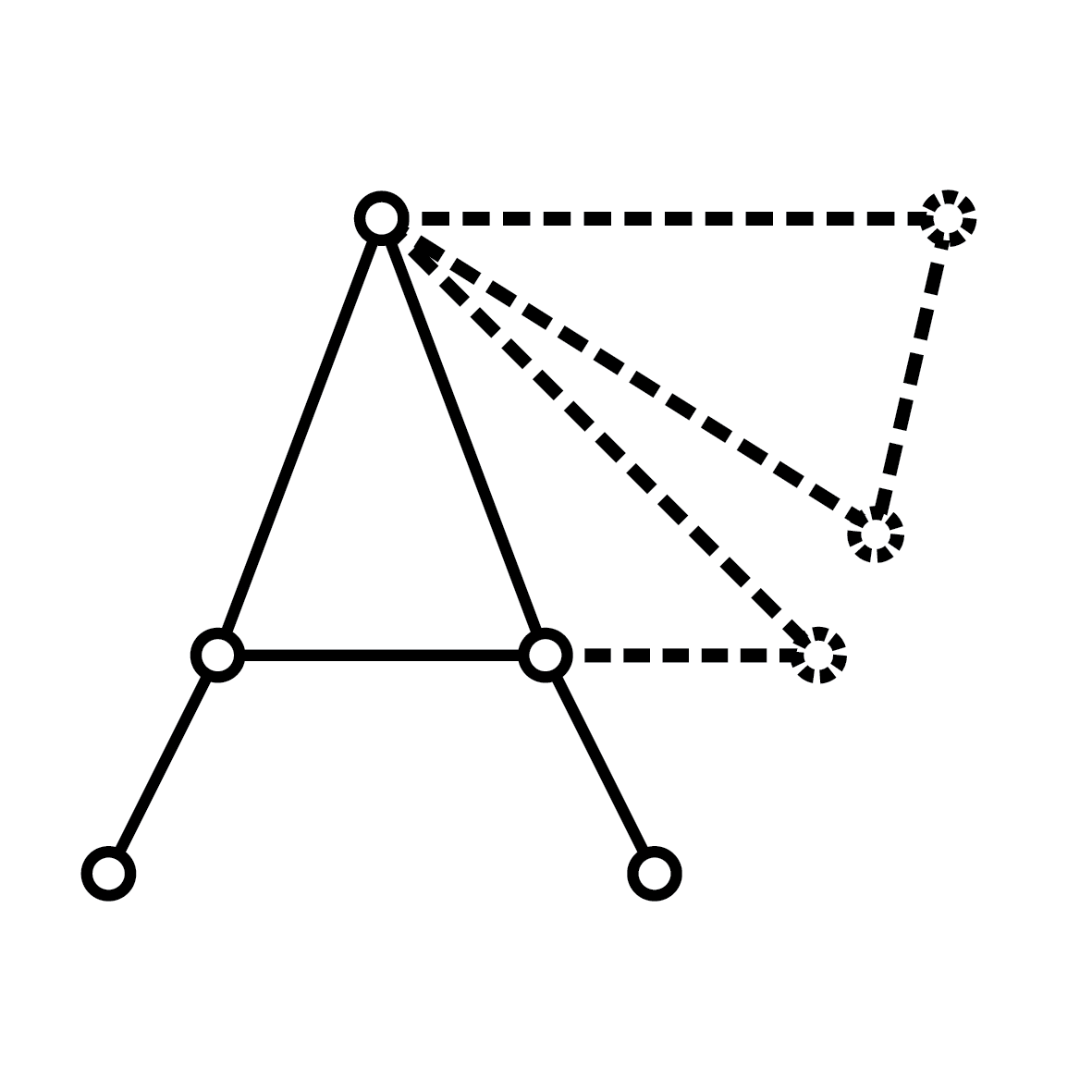}
	\put(-97,120){$v_0$}
	\put(-114,46){$v_1$}
	\put(-80,46){$v_2$}
	\put(-142,32){$v_3$}
	\put(-55,32){$v_4$}
	\put(-40,46){$v_5$}
	\put(-70,67){$A$}
	\put(-45,95){$A$}
\caption{Case 3 in Theorem \ref{thm:MainTheorem}.}
\label{fig:MainTheoremCase3}
\end{figure}

As $v_1$ is not in $\mathcal{B}_G$, there is triangle
$A$ in $G-v_1$.   If the two vertices in $A$ other
than $v_0$ are different from $v_2$, then at least
one of them is non-adjacent to $v_2$ (recall that
$G[L_1]$ is acyclic), and hence, this vertex together
with $v_0, v_1, v_2, v_3$, and $v_4$ induce a copy of
$F_2$.   Thus, one of the vertices of $A$ is $v_2$;
let us call the other vertex $v_5$ (see Figure
\ref{fig:MainTheoremCase3}).

Repeating the above procedure for $v_2$, there must
exist another triangle $B$ not containing $v_2$,
using $v_1$, and another vertex $v_6$, different
from $v_0$.  Recall that since $G[L_1]$ is acyclic,
then $v_5 \ne v_6$ and $v_1 v_5, v_2 v_6 \notin E_G$.
Thus, $\{ v_0, v_1, v_2, v_3, v_4, v_5, v_6 \}$ induces
a copy of $F_4$, a contradiction.

Hence, if vertices $v_1, v_2 \in L_1$ have neighbours in
$L_2$, then they cannot be adjacent.   Moreover, if
$v_1$ and $v_2$ have neighbours in $L_2$, as $G$ is
$\mathcal{F}_1$-free, then the distance between $v_1$
and $v_2$ in $G-v_0$ is even.   Thus, in every connected
component in $G[L_1]$, the vertices with neighbours in
$L_2$ are in the same part of the bipartition.

It follows from the previous description of the structure
of $G$ that the following colouring induces an
$M_1$-partition of $G$.   Colour $v_0$ with colour $3$
and every element of $L_2$ with colour $1$.   For every
connected component of $G[L_1]$, if it has any adjacencies
with $L_2$, (properly) $2$-colour it in such a way that
each vertex with neighbours in $L_2$ receives colour $2$;
else, $2$-colour it in any way.

Since the cases are exhaustive, the desired result follows.
\end{proof}

\section{Conclusions and further work}
\label{sec:conc}

We have provided a complete list of minimal obstructions
for one of the only two $3 \times 3$ matrices which have
inifinitely many chordal minimal obstructions.   This
nearly completes our knowledge on the matrix partition
problem for all $3 \times 3$ matrices on chordal graphs.
In \cite{federTCS349}, patterns $M$ with NP-complete
$M$-partition problems for chordal graphs are constructed,
but they are rather large (close to thirty rows and columns).
Hopefully, a complete understanding of the matrix partition
problem for small patterns will lead us to find the smallest
pattern $M$ having an NP-complete $M$-partition problem.

As the reader may notice, the exact list of chordal minimal
obstructions is still missing for $M_2$.  This list would
complete the analysis of all $3 \times 3$ patterns.
Fortunately, a complete list has been already obtained
as part of the first author's Ph.D. thesis, and a follow up
article including this result is in preparation.  An
interesting situation arises for $M_2$, there are two
different infinite families of chordal minimal obstructions
for the $M_2$-partition problem.

\section*{Acknwoledgments}

We would like to thank Sebasti\'an Gonz\'alez Hermosillo
de la Maza for pointing out Lemma \ref{lem:Bipart} to us.

\end{document}